\numberwithin{equation}{section}
\newtheorem{theorem}{Theorem}[section]
\newtheorem{lemma}{Lemma}[section]
\newtheorem{definition}{Definition}[section]
\newtheorem{remark}{Remark}[section]
\def\geq{\geqslant}
\def\leq{\leqslant}
\def\l{\left}
\def\r{\right}
\def\g{\mathfrak g}
\def\h{\mathfrak h}
\begin{document}
	\title {Weak Factorizations of the Hardy Spaces in Terms of Multilinear Calder\'on-Zygmund Operators on Ball Banach Function Spaces}
	\author[1]{Yichun Zhao}
	\author[2]{Xiangxing Tao}
	\author[1]{Jiang Zhou \thanks{Corresponding author E-mail: zhoujiang@xju.edu.cn. The research was supported by National Natural Science Foundation of China (Grant Nos. 12061069 and 12271483).}}
	\affil[1]{College of Mathematics and System Sciences, Xinjiang University, Urumqi 830046, PR China.}
	\affil[2]{Department of Mathematics, Zhejiang University of Science and Technology, Hangzhou 310023, PR China.}
	
	% 使用 \thanks 定义通讯作者
	
	\renewcommand*{\Affilfont}{\small\it} % 修改机构名称的字体与大小
	\renewcommand\Authands{ and } % 去掉 and 前的逗号
	\date{}
	\maketitle
	{\bf Abstract:}{~In this paper, our main purpose is to establish a weak factorization of the classical Hardy spaces in terms of a multilinear Calder\'on-Zygmund operator on the ball Banach function spaces.  Furthermore, a new characterization of the BMO space via the boundedness of the commutator generated by the multilinear Calder\'on-Zygmund operator is also obtained.  The results obtained in this paper have generality.  As examples, we apply the above results to weighted Lebesgue space, variable Lebesgue space, Herz space, mixed-norm Lebesgue space, Lorentz space and so on.
	}\par
	{{\bf Keywords:}   Hardy space, BMO space, multilinear Calder\'on-Zygmund operator, weak factorization, ball Banach function space.} 
	
	{\bf Mathematics Subject Classification(2020):}  42B35; 42B20.
	
	\baselineskip 15pt
	\section{Introduction}
	Hardy space $H^1(\mathbb R^n)$ is crucial in harmonic analysis. In the study of operator theory, the real-variable space $H^1(\mathbb R^n)$ is seen as a suitable alternative space to the space $L^1(\mathbb R^n)$, as a result of which, the Hardy space has received widespread attention and has been systematically studied. In particular, the characterization of Hardy spaces is a key part of Hardy space theory. For example, atomic decompositions can be used to investigate the duality of Hardy spaces \cite[the Section 3.2]{grafakos2009modern}, and molecular decompositions can effectively simplify the proof of the boundedness of operators on Hardy spaces \cite[the Section 2.4]{grafakos2009modern}. Both of the above decompositions use some functions with good integrability as the basic building blocks.  While, in 1976, an important work, the weak factorization of Hardy spaces was established by Coifman, Rochberg and Weiss \cite{coifman1976factorization}, who proved that functions in Hardy spaces could be represented by a linear combination of bilinear operators that was generated by the Riesz transforms, i.e. for any $f\in H^1(\mathbb R^n)$ can be represented that
	$$
	f=\sum_{i = 1}^\infty \sum_{j=1}^n \Pi_l\left(g_j^i, h_{j}^i\right)=\sum_{i = 1}^\infty  \sum_{j=1}^n \l[g_{ j}^i \mathcal{R}_j\left(h_{j}^i\right)+h_{j}^i \mathcal{R}_j\left(g_{j}^i\right)\r]~~
	\text{with}~~
	\sum_{i = 1}^\infty  \sum_{j=1}^n\left\|g_{ j}^i\right\|_{L^2}\left\|h_{j}^i\right\|_{L^2} \lesssim \|f\|_{\mathbf{H}^1}
	$$
	where $\mathcal{R}_j$ are the Riesz transforms, for $j=1, \ldots, n$.  In fact, as an application, BMO spaces can be characterized by the boundedness of the commutator generated by the Riesz transforms and functions in BMO spaces. After that, the results for weak factorizations of the space $H^1(\mathbb R^n)$ have been emerging, and more works can refer to \cite{duong2017factorization, komori2006factorization, li2017weak, uchiyama1981factorization, wang2023weak, wang2023factorization}.
	
	The multilinear Calder\'on-Zygmund operator is regarded as a generalization of the Riesz transform. Therefore the weak factorization of Hardy spaces via the multilinear Calder\'on-Zygmund operator on different spaces has also attracted much attention. In 2006, Komori and Mizuhara \cite{komori2006factorization} established a weak factorization of Hardy spaces on generalized Morrey spaces. In recent years, Li and Wick \cite{li2017weak} decomposed functions in the space $H^1(\mathbb R^n)$ by multilinear Calder\'on-Zygmund operators on Lebesgue spaces and given a new characterization of BMO space. Furthermore, Zhu and Wang \cite{wang2023factorization} generalized Li and Wick's results in \cite{li2017weak} to weighted Lebesgue spaces. Inspired by the results in \cite{wang2023factorization}, He and Tao \cite{he2023factorization} obtained a weak factorization of weighted Hardy spaces on the same space in 2019. Dao and Wick \cite{dao2023hardy} built the weak factorization of the Hardy space on Morrey spaces, which can be seen as natural generalizations of Lebesgue spaces.
	
	In 2017, Sawano et al. \cite{sawano2017hardy} defined the ball Banach space, which can unify many classical spaces in the harmonic analysis, such as Lebesgue space, weighted Lebesgue space, Herz space, mixed-paradigm space, Lorentz space and so on.   It is worth noting that compared to the Banach function space, the ball Banach function space contains the Morrey space. For more details about ball Banach function space, we can refer to \cite{wei2023block, zhang2021weak, tao2021compactness}. 
	
	Our main objective of this paper is to establish a weak factorization of Hardy spaces using a multilinear Calder\'on-Zygmund operator on the ball Banach function space and to further address the characterization of BMO spaces via commutator generated by multilinear Calder\'on-Zygmund operator and the BMO function. The weak factorization of Hardy spaces in this paper unifies the results in the \cite{komori2006factorization, li2017weak, wang2023factorization, he2023factorization, dao2023hardy}. As examples, we apply the results in this paper to weighted Lebesgue space, variable Lebesgue space, Herz space, mixed-norm Lebesgue space, and Lorentz space.
	
	Throughout the paper, we use the symbol  $C$ to denote a positive constant. The notations $\chi_E$  and $|E|$ mean that characteristic function and Lebesgue measure of measurable set $E$,  respectively. Moreover, for any $p\in [1,\infty]$, we write the conjugate index of $p$ as $p^\prime$ with $1/q+1/q^\prime =1$. We will write $A\lesssim B$ stands for $A\leq C B$. Furthermore, the $A\lesssim B\lesssim A$ will be denoted by $A\approx B$. In what follows, {\rm $\mathcal{M}$} stands for the set of all measurable functions on $\mathbb{R}^n$.  We also denote by $\mathbb B$ the set $\mathbb B=\{B(x,r): x\in \mathbb R^n, 0<r<\infty\}$, where $B(x,r)=\{y\in\mathbb R^n: |x-y|< r\}$. The notion $\mathcal{S}\left(\mathbb{R}^n\right)$ means that the spaces of all Schwartz functions on $\mathbb{R}^n$. We will use the symbol $\mathcal{S}^{\prime}\left(\mathbb{R}^n\right)$ to denote the dual space of $\mathcal{S}\left(\mathbb{R}^n\right)$. We also denote by $f_B$ the integral mean of the function $f$.

	\section{Preliminaries and  Statement of Main Results }
	\subsection{Ball Banach Function Spaces}
	To state the key theorems in this paper accurately and briefly, we first recall the definition of ball Banach function spaces and their related properties in this section.
	\begin{definition}\rm {\cite[Defination~2.2]{sawano2017hardy}}\label{ball Banach Def}
		We say a Banach space $X \subset \mathcal{M}\left(\mathbb{R}^n\right)$ is a ball Banach function space if the norm of $X$ satisfies the following properties:
		\begin{itemize*}
			\item [(1)]~$\|f\|_X=0$ if and only if $f=0$ almost everywhere;
			\item [(2)]~$|g| \leq|f|$ almost everywhere implies that $\|g\|_X \leq\|f\|_X$;
			\item [(3)]~$0 \leq f_m \nearrow f$ almost everywhere implies that $\left\|f_m\right\|_X \nearrow\|f\|_X$;
			\item [(4)]~for any ball $B \subset \mathbb{B}$, we have $\l\|\chi_B\r\|_X<\infty$;
			\item [(5)]~for any ball $B \subset \mathbb{B}$, there exists a positive constant $C_B$, depending on $B$, such that, for all $f \in X$,
			$$
			\int_B|f(x)| d x \leq C_B\|f\|_X .
			$$
		\end{itemize*}
	\end{definition}
	
	\begin{remark}\rm \label{re Minkowski}
		The Definition \ref{ball Banach Def} points that the norm of ball Banach function space $X$ satisfies the triangle inequality, i. e. for any $f,g\in X$, the following inequality holds
		\begin{align}\label{ine 2.1}
			\|f+g\|_X \leq\|f\|_X+\|g\|_X.
		\end{align}
	\end{remark}
	
	The inequality \eqref{ine 2.1} in Remark \ref{re Minkowski} can be seen as the Minkowski inequality when ball Banach function spaces $X$ replace to the Lebesgue spaces $L^p(\mathbb R^n)$. To give the corresponding H\"older inequality on ball Banach spaces, we will recall the associate space of the ball Banach spaces.
	\begin{definition}\rm {\cite[Defination~2.3]{sawano2017hardy}}
		The associate space (K\"othe dual) of ball Banach function space $X$ is defined by
		$$
		X^{\prime}:=\left\{f \in \mathcal{M}\left(\mathbb{R}^n\right):\|f\|_{X^{\prime}}=\sup _{\|g\|_X=1} \int_{\mathbb{R}^n}|f(x) g(x)| d x<\infty\right\},
		$$
		and the norm $\|\cdot\|_{X^{\prime}}$ can be seen as the norm of associate space $X^\prime$.
	\end{definition}
	\begin{remark}\rm \label{re ball Banach}
		Let $X$ be a ball Banach function space with associate spaces $X^\prime$. The following properties were proven by Zhang et al. in \cite{zhang2021weak}.
		\begin{enumerate*}
			\item [(1)]~The associate space $X^\prime$ is also
			a ball Banach function space.
			\item [(2)]~For any $1\leq t<\infty$, the spaces $X^t$ is also a ball Banach function space.
			\item [(3)]~The H\"older inequality on ball Banach space, for any $f\in X$, $g\in X^\prime$, we deduce that  $fg\in L^1(\mathbb R^n )$, i.e.
			\begin{align}\label{ine Holder}
				\int_{\mathbb{R}^n}|f(x) g(x)| d x \leq\|f\|_X\|g\|_{X^{\prime}}.
			\end{align} 
		\end{enumerate*}
	\end{remark}
	
	\subsection{ Multilinear Calder\'on-Zygmund operators}
	\begin{definition}\rm \label{def Calderon}
		Let $K\left(z_0, z_1, \ldots, z_m\right), z_i \in \mathbb{R}^n ~\text{for all} ~i=0,1, \ldots, m$, be a locally integrable function, defined away from the diagonal $\left\{z_0=z_1=\cdots=z_m\right\}$. Then, the function $K$ is called an m-linear Calder\'on-Zygmund kernel if it satisfies the following size condition
		\begin{align*}
			\left|K\left(z_0, z_1, \ldots, z_m\right)\right| \lesssim \frac{1}{\left(\sum_{k, l=0}^m\left|z_k-z_l\right|\right)^{m n}},
		\end{align*}
		and, for some $\delta>0$, the smoothness condition holds
		\begin{align}\label{smooth condition}
			\left|K\left(z_0, z_1, \ldots, z_j, \ldots, z_m\right)-K\left(z_0, z_1, \ldots, z_j^{\prime}, \ldots, z_m\right)\right| \lesssim \frac{\left|z_j-z_j^{\prime}\right|^\delta}{\left(\sum_{k, l=0}^m\left|z_k-z_l\right|\right)^{m n+\delta}}
		\end{align}
		whenever $\left|z_j-z_j^{\prime}\right| \leq \frac{1}{2} \max _{0 \leq k \leq m}\left|z_j-z_k\right|$.
		
		Let $T$ be an $m$-linear operator mapping from $\mathcal{S}\left(\mathbb{R}^n\right) \times \mathcal{S}\left(\mathbb{R}^n\right) \times \cdots \times \mathcal{S}\left(\mathbb{R}^n\right)$ to $\mathcal{S}^{\prime}\left(\mathbb{R}^n\right)$. Then, the $T$ associated with the $m$-linear Calder\'on-Zygmund kernel $K$ is defined by 
		$$
		T\left(f_1, \ldots, f_m\right)(x)=\int_{\mathbb{R}^{m n}} K\left(x, z_1, \ldots, z_m\right) \prod_{j=1}^m f_j\left(z_j\right) d z_1 \cdots d z_m,
		$$
		whenever $f_1, \ldots, f_m \in \mathcal{S}\left(\mathbb{R}^n\right)$ with compact support and $x \notin \cap_{j=1}^m \operatorname{supp}\left(f_j\right)$. 
		
		Furthermore, we say that the $m$-linear Calder\'on-Zygmund operator $T$ is $m n$-homogeneous, if for any $x\in B_0\left(x_0, r\right)$,the following inequality holds
		\begin{align}\label{ine mn-homogeneous}
			\left|T\left(\chi_{B_1}, \ldots, \chi_{B_m}\right)(x)\right| \geq \frac{1}{M^{m n}}, \quad \forall x \in B_0\left(x_0, r\right)
		\end{align}
		where $B_i=B\left(x_i, r\right)$, $i=0, 1, \dots, m$ satisfy the condition that $\left|z_0-z_l\right| \approx M r$ for all $z_0 \in B_0$, and $z_l \in B_l, l=1,2, \ldots, m$, where $r>0$ and large enough $M$.
	\end{definition}
	
	To define the bilinear operators generated by the $m$-linear Calder\'on-Zygmund operator $T$, we say the the $T_j^*$ is $j$-th transpose of $T$, if, for any $f_1, \ldots, f_m, g \in \mathcal{S}$, we have
	\begin{align}\label{ine T*}
		\left\langle T_j^*\left(f_1, \ldots, f_m\right), g\right\rangle=\left\langle T\left(f_1, \ldots, f_{j-1}, g, f_{j+1}, \ldots, f_m\right), f_j\right\rangle.
	\end{align}
	
	\begin{definition}\rm 
		Let $T$ be an operator associated with the $m$-linear Calder\'on-Zygmund kernel as Definition \ref{def Calderon}. We define the multilinear operators associated with $T$ by
		$$
		\Pi_l\left(\h, \g_1, \ldots, \g_m\right)(x):=\g_l(x) T_l^*\left(\g_1, \ldots, \g_{l-1}, \h, \g_{l+1}, \ldots, \g_m\right)(x)-\h(x) T\left(\g_1, \ldots, \g_m\right)(x) .
		$$
	\end{definition}
	
	Let $T$ be an operator associated with the $m$-linear Calder\'on-Zygmund kernel as Definition \ref{def Calderon}. The $l-$ th partial multilinear commutator of $T$ can be defined by 
	\begin{align*}
		[b, T]_{l}\left(\g_{1}, \ldots, \g_{m}\right)(x):=T\left(\g_{1}, \ldots, b \g_{l}, \ldots, \g_{m}\right)(x)-b T\left(\g_{1}, \ldots, \g_{m}\right)(x) .
	\end{align*}
	\subsection{The Atomic Hardy Spaces $H^1(\mathbb R^n)$}
	\begin{definition}\rm \label{def H1}
		We say a real-valued function $a$ is an atom, if the function $a$ satisfies the following conditions
		$$
		{(\rm i)}\operatorname {\rm supp}(a)\subset B(x, r) \subset \mathbb B,\quad  {(\rm ii)}\int_{\mathbb{R}^n} a(x) d x=0, \quad {(\rm iii)}~\|a\|_{L^{\infty}} \leq r^{-n} .
		$$
		Furthermore, we define the atomic Hardy space $H^1(\mathbb R^n)$ by
		$$
		\mathbf{H}^1\left(\mathbb{R}^n\right)=\left\{f: f=\sum_{i = 1}^\infty \lambda_i a_i,~ a_i~\text{is an atom for any }~i=1, 2, \dots, \{\lambda_i\}\in \ell^1(\mathbb R)\right\} .
		$$	
		Moreover, we define the norm of the atomic Hardy space $\mathbf{H}^1\left(\mathbb{R}^n\right)$ as follows.
		$$
		\|f\|_{\mathbf{H}^1\left(\mathbb{R}^n\right)}=\inf \left\{\sum_{i = 1}^\infty\left|\lambda_i\right|: f=\sum_{i = 1}^\infty \lambda_i a_i\right\} ,
		$$
		where the infimum is taken over all possible representations of $f$
	\end{definition}
	\subsection{Main Results}
	To provide a brief statement of the theorem in this paper, unless otherwise stated, we assume that $T$ is an m-linear Calderón-Zygmund operator with $m n$-homogeneous condition \eqref{ine mn-homogeneous} and the means of the infimum $\inf$ is taken over all possible decompositions of $f$ that satisfy \eqref{ine represent} in this paper.
	\begin{theorem}\label{the factorization}
		Let $1 \leq l \leq m$, $1<p_i<\infty$ for all $i=1, 2, \dots, m$ and $1/p_0=1/p_1+1/p_2+\dots+1/p_m$ with $1\leq p_0<\infty$. Assume the function $b\in \rm BMO$, the operator $T$ and commutators $[b,T]_l$ are bounded from $X^{p_1}\times \dots \times X^{p_m}$ to $X^{p_0}$. Then, for any $f \in \mathbf{H}^1\left(\mathbb{R}^n\right)$, there exists sequences $\left\{\lambda_{i,j}\right\} \in \ell^1$ and functions $\h_j^i\in [X^{p_0}]^\prime$ and $\g_{j, 1}^i\in X^{p_1}, \ldots, \g_{j, m}^i \in X^{p_m}$ such that
		\begin{align}\label{ine represent}
			f=\sum_{i=1}^{\infty} \sum_{j=1}^{\infty} \lambda_{i,j} \Pi_l\left(\h_j^i, \g_{j, 1}^i, \ldots, \g_{j, m}^i\right)~~~~~~\text{in the sense of ~}~\mathbf{H}^1\left(\mathbb{R}^n\right)
		\end{align}
		Moreover, the following equivalence of norm holds
		$$
		\|f\|_{\mathbf{H}^1} \approx \inf \left\{\sum_{i=1}^{\infty} \sum_{j=1}^{\infty}\left|\lambda_{i,j}\right|\left\|\h_j^i\right\|_{[X^p]^\prime}\left\|\g_{j, 1}^i\right\|_{X^{p_1}} \dots \l\|\g_{j, m}^i\r \|_{X^{p_m}} \right\}.
		$$
	\end{theorem}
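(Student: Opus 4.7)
The plan is to prove the stated equivalence in two directions. The first (easier) direction, $\|f\|_{\mathbf H^1}\lesssim \inf(\cdots)$, reduces to showing the bound
$$
\|\Pi_l(\h,\g_1,\dots,\g_m)\|_{\mathbf H^1}\lesssim \|\h\|_{[X^{p_0}]'}\prod_{i=1}^m\|\g_i\|_{X^{p_i}}
$$
for each individual building block. To establish this I would argue by duality with $\mathrm{BMO}$. Unfolding the definition of $\Pi_l$ and invoking the transpose identity \eqref{ine T*}, for every $b\in\mathrm{BMO}$ the pairing collapses to
$$
\int_{\mathbb R^n} b(x)\,\Pi_l(\h,\g_1,\dots,\g_m)(x)\,dx=\int_{\mathbb R^n}\h(x)\,[b,T]_l(\g_1,\dots,\g_m)(x)\,dx.
$$
Applying the H\"older inequality on ball Banach function spaces (Remark \ref{re ball Banach}(3)) together with the hypothesized $X^{p_1}\times\cdots\times X^{p_m}\to X^{p_0}$ boundedness of $[b,T]_l$ yields the above pairing bounded by $\|b\|_{\mathrm{BMO}}\|\h\|_{[X^{p_0}]'}\prod_i\|\g_i\|_{X^{p_i}}$. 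By the Fefferman duality $(\mathbf H^1)^*=\mathrm{BMO}$, this is precisely the required $\mathbf H^1$-norm estimate.

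For the factorization direction $\|f\|_{\mathbf H^1}\gtrsim \inf(\cdots)$, by Definition \ref{def H1} it suffices to produce such a decomposition with controlled norm for every $\mathbf H^1$-atom $a$ supported in some $B_0=B(x_0,r)$. I would adapt the classical Coifman--Rochberg--Weiss scheme as refined in \cite{li2017weak,dao2023hardy,wang2023factorization} to the ball Banach setting: fix a large constant $M$, choose balls $B_i=B(x_i,r)$ with $|x_0-x_i|\approx Mr$ so that $B_0,B_1,\dots,B_m$ are pairwise disjoint and the $mn$-homogeneity assumption \eqref{ine mn-homogeneous} forces $|T(\chi_{B_1},\dots,\chi_{B_m})(x)|\geq M^{-mn}$ for $x\in B_0$. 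Set $\g_i=\chi_{B_i}/\|\chi_{B_i}\|_{X^{p_i}}$ and choose $\h$ as a suitable scalar multiple of $a\chi_{B_0}/T(\g_1,\dots,\g_m)$. Because the supports of $\h$ and of each $\g_i$ are pairwise disjoint, the first summand $\g_l T_l^*(\g_1,\dots,\h,\dots,\g_m)$ of $\Pi_l(\h,\g_1,\dots,\g_m)$ vanishes on $B_0$, while $-\h\,T(\g_1,\dots,\g_m)$ reproduces $a$ there. Consequently $a-\Pi_l(\h,\g_1,\dots,\g_m)$ is supported inside $B_l$ and constitutes the ``error''.

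The crux of the argument is to show that, after rescaling, this error is a fixed scalar multiple (strictly smaller than $1$) of another $\mathbf H^1$-atom, so that iteration produces a geometric series converging in $\mathbf H^1$ with the desired norm control. This will use: (i) the cancellation $\int a=0$, which transfers to $\h$, so that the value of $T_l^*(\g_1,\dots,\h,\dots,\g_m)$ on $B_l$ is controlled via the kernel smoothness \eqref{smooth condition} by a constant times $M^{-\delta}$; and (ii) the generalized H\"older inequality on ball Banach function spaces to balance the products $\|\chi_{B_i}\|_{X^{p_i}}$ against $\|\chi_{B_0}\|_{[X^{p_0}]'}$, keeping the norm of the error atom strictly smaller than that of $a$.

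I expect the main obstacle to be precisely point (ii), the quantitative error analysis in the abstract $X$-framework. In weighted Lebesgue or Morrey settings, the characteristic-function norms scale explicitly in the radius, so the geometric contraction ratio comes out cleanly; on a general ball Banach function space no such scaling is available and one must instead work directly with H\"older and the associate space to absorb the $\|\chi_B\|$-factors without losing the dependence on $M$. A uniform-in-atom choice of $M$ large enough that the contraction ratio is strictly below $1$ must be produced once and for all, after which iteration yields the representation \eqref{ine represent} with $\sum_{i,j}|\lambda_{i,j}|\|\h_j^i\|_{[X^{p_0}]'}\prod_k\|\g_{j,k}^i\|_{X^{p_k}}\lesssim\|f\|_{\mathbf H^1}$, completing the norm equivalence.
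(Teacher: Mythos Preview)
Your first direction (the $\lesssim$ inequality via $\mathrm{BMO}$-duality and the transpose identity reducing the pairing to $\int \h\,[b,T]_l(\g_1,\dots,\g_m)$) is correct and is exactly Lemma~\ref{lem proud_lH1} of the paper.

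For the factorization direction your overall plan (approximate each atom by a single $\Pi_l$, show the remainder is small in $\mathbf H^1$, then iterate) is also the paper's plan, but your specific construction contains an inconsistency. You place the atom in the $\h$-slot, setting $\h=-a/T(\g_1,\dots,\g_m)$ so that $-\h\,T(\g_1,\dots,\g_m)=a$ exactly on $B_0$ and the error lives only on $B_l$. You then claim that $\int\h=0$ (``the cancellation $\int a=0$ transfers to $\h$'') in order to gain $M^{-\delta}$ from the kernel smoothness when estimating $T_l^*(\dots,\h,\dots)$ on $B_l$. But since you divide by the \emph{function} $T(\g_1,\dots,\g_m)$, which is not constant on $B_0$, the mean-zero of $a$ does \emph{not} pass to $\h$. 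Without that cancellation, the size condition alone gives only $|e|\lesssim r^{-n}\chi_{B_l}$ --- an atom, but with no smallness in $M$, so the iteration does not contract. If instead you divide by the constant $T(\g_1,\dots,\g_m)(x_0)$, then $\h$ inherits mean zero but $-\h\,T(\g_1,\dots,\g_m)$ no longer equals $a$ exactly, and the error is supported on \emph{both} $B_0$ and $B_l$, contradicting your single-ball claim.

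The paper avoids this by swapping roles: it sets $\h=\chi_{B_l}$, $\g_j=\chi_{B_j}$ for $j\neq l$, and puts the atom in the $l$-th slot via $\g_l=a/T_l^*(\g_1,\dots,\h,\dots,\g_m)(x_0)$ (division by a \emph{constant}, so $\int\g_l=0$). The error then has two pieces, one on $B(x_0,r)$ and one on $B_l$, each of size $\lesssim M^{-\delta}r^{-n}$, and the two-ball $\mathbf H^1$ estimate (Lemma~\ref{lem log}) gives $\|a-\Pi_l(\h,\g_1,\dots,\g_m)\|_{\mathbf H^1}\lesssim (\log M)/M^\delta$; the iteration then contracts geometrically. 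Your anticipated obstacle~(ii), controlling $\|\h\|_{[X^{p_0}]'}\prod_j\|\g_j\|_{X^{p_j}}$ in the abstract $X$-setting, is handled in the paper not by H\"older alone but by the identity $\|\chi_B\|_X\,\|\chi_B\|_{X'}\approx|B|$ (Lemma~\ref{lem characterization function}), which in turn relies on the boundedness of the Hardy--Littlewood maximal operator on $X$; combined with the convexification relation $\prod_j\|\chi_B\|_{X^{p_j}}=\|\chi_B\|_{X^{p_0}}$ this yields the required bound $\lesssim M^{mn}$.
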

	
	\begin{theorem}\label{the characterization BMO}
		Let $1 \leq l \leq m$, $1<p_i<\infty$ for all $i=1, 2, \dots, m$ and $1/p_0=1/p_1+1/p_2+\dots+1/p_m$ with $1\leq p_0<\infty$. Assume the function $b\in \rm BMO$, the operator $T$ and commutators $[b,T]_l$ are bounded from $X^{p_1}\times \dots \times X^{p_m}$ to $X^{p_0}$. Then, the commutator $[b,T]_l$ is bounded from $X^{p_1}\times \dots \times X^{p_m}$ to $X^{p_0}$ if and only if $b\in {\rm BMO}$, i.e.
		\begin{align*}
			b\in {\rm BMO}(\mathbb R^n)=\l\{f\in L_{\rm loc}^1:\|f\|_{\rm BMO}=\sup_{B\in \mathbb B}\frac{1}{|B|}\int_B|f(x)-f_B| dx<\infty\r\}.
		\end{align*}
	\end{theorem}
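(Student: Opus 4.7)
The plan is to prove the nontrivial direction — that if the commutator $[b,T]_l$ is bounded from $X^{p_1}\times\cdots\times X^{p_m}$ to $X^{p_0}$, then $b\in\mathrm{BMO}$ — by combining the weak factorization from Theorem \ref{the factorization} with the classical $\mathbf H^1$--$\mathrm{BMO}$ duality. The converse direction (BMO function gives a bounded commutator) is already among the standing hypotheses. So the heart of the argument will be to show that the linear functional $f\mapsto\int_{\mathbb R^n}b(x)f(x)\,dx$, defined a priori on a dense subclass of $\mathbf H^1(\mathbb R^n)$, satisfies the uniform bound $|\int bf\,dx|\lesssim\|f\|_{\mathbf H^1}$ whenever $f$ admits a weak factorization in the sense of \eqref{ine represent}.

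The key algebraic identity I would establish first is
\begin{equation*}
\int_{\mathbb R^n} b(x)\,\Pi_l(\h,\g_1,\ldots,\g_m)(x)\,dx
= \int_{\mathbb R^n} \h(x)\,[b,T]_l(\g_1,\ldots,\g_m)(x)\,dx.
\end{equation*}
This follows by unpacking the definition of $\Pi_l$ and applying the transpose relation \eqref{ine T*} with $f_l=b\g_l$ and $g=\h$: the relation converts $\int b\g_l\, T_l^*(\g_1,\ldots,\h,\ldots,\g_m)\,dx$ into $\int \h\,T(\g_1,\ldots,b\g_l,\ldots,\g_m)\,dx$, and subtracting $\int \h\,b\,T(\g_1,\ldots,\g_m)\,dx$ produces the commutator on the right. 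Combining this identity with the H\"older inequality \eqref{ine Holder} on ball Banach function spaces and the assumed boundedness of $[b,T]_l$ gives the termwise control
\begin{equation*}
\left|\int_{\mathbb R^n} b(x)\,\Pi_l(\h,\g_1,\ldots,\g_m)(x)\,dx\right|
\lesssim \|[b,T]_l\|\,\|\h\|_{[X^{p_0}]'}\,\|\g_1\|_{X^{p_1}}\cdots\|\g_m\|_{X^{p_m}}.
\end{equation*}

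Next I would fix $f\in\mathbf H^1(\mathbb R^n)$, invoke Theorem \ref{the factorization} to write $f=\sum_{i,j}\lambda_{i,j}\Pi_l(\h_j^i,\g_{j,1}^i,\ldots,\g_{j,m}^i)$, pair both sides with $b$, apply the termwise estimate above, sum, and then take the infimum over all admissible factorizations. This yields
\begin{equation*}
\left|\int_{\mathbb R^n} b(x)f(x)\,dx\right|\;\lesssim\;\|[b,T]_l\|\,\|f\|_{\mathbf H^1}.
\end{equation*}
Since this holds on a dense subspace of $\mathbf H^1$, the functional extends boundedly, and the classical identification $(\mathbf H^1)^*=\mathrm{BMO}$ delivers $b\in\mathrm{BMO}$ with the quantitative estimate $\|b\|_{\mathrm{BMO}}\lesssim\|[b,T]_l\|_{X^{p_1}\times\cdots\times X^{p_m}\to X^{p_0}}$.

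The main obstacle I anticipate is the legitimacy of the termwise pairing and the interchange of summation with integration: one needs the series \eqref{ine represent} to converge in $\mathbf H^1$ strongly enough to pair with an arbitrary $\mathrm{BMO}$ function, and the factors $\h_j^i,\g_{j,k}^i$ must belong to classes on which the transpose identity \eqref{ine T*} and the commutator make analytic sense. I would handle this by carrying out the computation first on a dense subclass of compactly supported bounded functions with mean zero (for which Schwartz-class arguments and the cancellation $\int \Pi_l\,dx=0$ apply), and then passing to the limit using the control of the factorization norm provided by Theorem \ref{the factorization}.
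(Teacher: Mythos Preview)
Your proposal is correct and follows essentially the same route as the paper's own proof: both invoke the weak factorization from Theorem~\ref{the factorization}, use the identity $\int b\,\Pi_l(\h,\g_1,\ldots,\g_m)\,dx=\int \h\,[b,T]_l(\g_1,\ldots,\g_m)\,dx$, apply the H\"older inequality on ball Banach spaces together with the assumed boundedness of $[b,T]_l$, and then conclude via $\mathbf H^1$--$\mathrm{BMO}$ duality. The only cosmetic difference is in the bookkeeping for legitimizing the pairing: the paper truncates $b$ to $b_L=b\chi_{B(x_0,L)}$ and passes to the limit $L\to\infty$ after restricting to $f\in\mathbf H^1\cap L_c^\infty$, whereas you phrase the same step as working on a dense subclass of compactly supported bounded mean-zero functions---these are equivalent technical devices.
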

	
	\section{The proofs of Theorem \ref{the factorization} and Theorem \ref{the characterization BMO}}
	In this section, we will establish the weak Hardy factorization in terms of multilinear commutators $[b,T]_l$ on ball Banach function spaces, Furthermore, a new characterization of $\rm BMO$ space is obtained. To prove the theorems in this paper, we first establish some key lemmas as follows.
	\begin{lemma}{\rm \cite{li2017weak}} \label{lem log}\rm 
		Suppose $M$ is a enough large number, and $y, z \in \mathbb{R}^n$ such that $|y-z|=Mr$. If the function $\mathcal H$ satisfies that 
		\begin{align*}
			{(\rm i)} \int_{\mathbb{R}^n} \mathcal H(x) d x=0~~~~\text{,}~~~~	{(\rm ii)}~|\mathcal H(x)| \leq r^{-n}\left(\chi_{B\left(y, r\right)}(x)+\chi_{B\left(z, r\right)}(x)\right),
		\end{align*}
		for any $x \in \mathbb{R}^n$ and some $r>0$. Then, 
		$$
		\|\mathcal H\|_{\mathbf{H}^1\left(\mathbb{R}^n\right)} \lesssim \log M .
		$$
	\end{lemma}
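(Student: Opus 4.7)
My plan is to prove $\|\mathcal H\|_{H^1}\lesssim \log M$ by writing $\mathcal H$ as a linear combination of an $O(1)$ number of $H^1$-atoms plus a normalized ``dipole'' whose $H^1$ norm grows only logarithmically in $M$. The logarithm comes from a telescoping decomposition along doubling scales, and this is the only nontrivial step.

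First I would split $\mathcal H=\mathcal H_y+\mathcal H_z$ with $\mathcal H_y=\mathcal H\chi_{B(y,r)}$ and $\mathcal H_z=\mathcal H\chi_{B(z,r)}$. Setting $c:=\int \mathcal H_y\,dx$, condition (i) gives $\int \mathcal H_z\,dx=-c$, and condition (ii) gives $|c|\lesssim 1$. Writing
\[
\mathcal H_y=\bigl(\mathcal H_y-c\,|B(y,r)|^{-1}\chi_{B(y,r)}\bigr)+c\,|B(y,r)|^{-1}\chi_{B(y,r)},
\]
the first summand is supported in $B(y,r)$, has mean zero, and satisfies $\|\cdot\|_{L^\infty}\lesssim r^{-n}$, so it is a constant multiple of an $H^1$-atom and therefore has $H^1$ norm $\lesssim 1$; the analogous decomposition holds for $\mathcal H_z$. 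Thus the problem reduces to proving
\[
\Bigl\|\,|B(y,r)|^{-1}\chi_{B(y,r)}-|B(z,r)|^{-1}\chi_{B(z,r)}\Bigr\|_{H^1}\lesssim \log M.
\]

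For this dipole estimate I would telescope along balls of doubling radius. Choose $K\in\mathbb N$ with $2^K r\approx Mr$, so $K\approx \log_2 M$. For $k=0,1,\dots,K-1$ define
\[
a_k := |B(y,2^k r)|^{-1}\chi_{B(y,2^k r)}-|B(y,2^{k+1}r)|^{-1}\chi_{B(y,2^{k+1}r)},
\]
which is mean zero, supported in $B(y,2^{k+1}r)$, and satisfies $\|a_k\|_{L^\infty}\lesssim |B(y,2^{k+1}r)|^{-1}$; hence $a_k$ is an $H^1$-atom up to an absolute constant, and $\|a_k\|_{H^1}\lesssim 1$. Define $a_k'$ analogously at the base point $z$. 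The ``bridge'' function
\[
A := |B(y,2^K r)|^{-1}\chi_{B(y,2^K r)}-|B(z,2^K r)|^{-1}\chi_{B(z,2^K r)}
\]
is mean zero, supported in a single ball of radius $\lesssim 2^K r\approx Mr$ (e.g.\ centered at the midpoint of $y,z$), and has $L^\infty$ norm $\lesssim (Mr)^{-n}$, so it is also an $H^1$-atom up to an absolute constant. Telescoping gives
\[
|B(y,r)|^{-1}\chi_{B(y,r)}-|B(z,r)|^{-1}\chi_{B(z,r)}=\sum_{k=0}^{K-1}a_k+A-\sum_{k=0}^{K-1}a_k',
\]
and summing the atomic norms yields a bound of order $K+1\lesssim\log M$, as desired.

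The decomposition and atomization in the first paragraph are routine; the real content is the telescoping step. The main obstacle is recognizing that a naive treatment of the dipole as a single atom on a ball of radius $\sim Mr$ produces the wrong bound $M^n$, and that replacing it with a chain of same-center doubling-scale atoms (plus a single large-scale bridging atom) turns a geometric cost into a logarithmic one. Once the geometry is arranged so that each consecutive pair of balls is contained in the next and the $L^\infty$ size is correctly normalized against the larger ball's volume, each $a_k$ is automatically an $H^1$-atom with uniformly bounded norm, and assembling the bound $\|\mathcal H\|_{H^1}\lesssim \log M$ is immediate.
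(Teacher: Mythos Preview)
Your argument is correct and is essentially the standard telescoping proof of this estimate. Note that the paper does not give its own proof of this lemma: it is quoted from \cite{li2017weak} (Li--Wick), and the proof there proceeds exactly along the lines you describe---reduce to a normalized dipole, telescope along dyadic radii to produce $O(\log M)$ atoms, and bridge the two centers with one large-scale atom.
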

	\begin{lemma}\label{lem proud_lH1}
		Let $1 \leq l \leq m$, $1<p_1,p_2,\dots, p_m<\infty$ and $1/p_0=1/p_1+1/p_2+\dots+1/p_m$ with $1\leq p_0<\infty$. Assume the function $b\in \rm BMO$, the operators $T$ and commutators $[b,T]_l$ are bounded from $X^{p_1}\times \dots \times X^{p_m}$ to $X^{p_0}$. Then, for any $\h\in [X^{p_0}]^\prime$ and $\g_{1}\in X^{p_1}, \ldots, \g_{m}\in X^{p_m}$, the following estimate holds
		$$
		\left\|\Pi_l\left(\h, \g_1, \ldots, \g_m\right)\right\|_{\mathbf H^1} \leqslant C\|\h\|_{ [X^{p_0}]^\prime}\left\|\g_1\right\|_{X^{p_1}} \ldots\left\|\g_m\right\|_{X^{p_m}} .
		$$
	\end{lemma}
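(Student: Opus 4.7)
My strategy is to estimate $\|\Pi_l(\h, \g_1, \ldots, \g_m)\|_{\mathbf{H}^1}$ by duality with $\mathrm{BMO}$. Since $(\mathbf{H}^1)^* = \mathrm{BMO}$, it suffices to show that $\Pi_l(\h, \g_1, \ldots, \g_m)$ lies in $L^1(\mathbb{R}^n)$ with vanishing mean, and to control the pairing $\int \Pi_l(\h, \g_1, \ldots, \g_m)(x)\, \phi(x)\, dx$ uniformly for $\phi \in \mathrm{BMO}$ with $\|\phi\|_{\mathrm{BMO}} \leq 1$. The $L^1$ membership follows from the H\"older inequality \eqref{ine Holder} on the pair $(X^{p_0}, [X^{p_0}]')$ together with the assumed boundedness of $T$ (and the analogous boundedness inherited by $T_l^*$) from $X^{p_1} \times \cdots \times X^{p_m}$ to $X^{p_0}$. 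The mean zero property will follow from the computation below by choosing $\phi \equiv 1$, since $[1, T]_l \equiv 0$.

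The key step identifies the BMO pairing with a commutator integral. By the definition \eqref{ine T*} of the transpose $T_l^*$, applied with test function $g = \g_l \phi$,
\begin{align*}
\int \g_l(x)\phi(x)\, T_l^*(\g_1, \ldots, \g_{l-1}, \h, \g_{l+1}, \ldots, \g_m)(x)\, dx = \int \h(x)\, T(\g_1, \ldots, \g_l \phi, \ldots, \g_m)(x)\, dx.
\end{align*}
Subtracting off the second term appearing in the definition of $\Pi_l$ then yields
\begin{align*}
\int \Pi_l(\h, \g_1, \ldots, \g_m)(x)\, \phi(x)\, dx = \int \h(x)\, [\phi, T]_l(\g_1, \ldots, \g_m)(x)\, dx.
\end{align*}
Applying the H\"older inequality \eqref{ine Holder} on the pair $([X^{p_0}]', X^{p_0})$ to the right-hand side, and then invoking the commutator bound $\|[\phi, T]_l(\g_1, \ldots, \g_m)\|_{X^{p_0}} \lesssim \|\phi\|_{\mathrm{BMO}} \|\g_1\|_{X^{p_1}} \cdots \|\g_m\|_{X^{p_m}}$ (the standing hypothesis applied with $\phi$ in place of $b$), followed by taking the supremum over $\phi$, yields the desired estimate.

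The main obstacle I foresee is the final commutator estimate: the hypothesis literally supplies boundedness of $[b,T]_l$ only for one fixed $b$, whereas the duality argument requires a \emph{uniform} bound with operator norm proportional to $\|\phi\|_{\mathrm{BMO}}$ for every $\phi \in \mathrm{BMO}$. I would interpret the hypothesis in its standard qualitative sense in multilinear Calder\'on-Zygmund theory on ball Banach function spaces, where boundedness of $T$ and the BMO regularity of the symbol propagate to a $\|\phi\|_{\mathrm{BMO}}$-scaled commutator bound. A secondary technical point is justifying the duality pairing itself: since $\Pi_l \in L^1$ has mean zero, a standard atomic-approximation argument confirms that $\|\Pi_l\|_{\mathbf{H}^1}$ is controlled by the supremum of its BMO test integrals.
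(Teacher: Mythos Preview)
Your proposal is correct and follows essentially the same route as the paper: establish $\Pi_l\in L^1$ via the H\"older inequality \eqref{ine Holder} and the boundedness of $T$ (and of $T_l^*$ by the index relation \eqref{ine indexs}), verify mean zero, pass to the $\mathrm{BMO}$ pairing, rewrite it as $\int \h\,[\phi,T]_l(\g_1,\ldots,\g_m)$, and conclude by H\"older plus the commutator bound. The paper handles the point you flag about the hypothesis in the same way, tacitly reading ``$[b,T]_l$ bounded'' as the standard $\|b\|_{\mathrm{BMO}}$-scaled estimate valid for all $b\in\mathrm{BMO}$.
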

	\begin{proof}
		For any $\h\in [X^{p_0}]^\prime$ and $\g_{1}\in X^{p_1}, \ldots, \g_{m}\in X^{p_m}$, from the boundedness of $T$, one obtains that 
		\begin{align*}
			\int_{\mathbb{R}^n}\left|T\left(\g_1, \ldots, \g_m\right)(x) \h (x) \right| d x 
			\leqslant\|\h\|_{[X^{p_0}]^\prime}\left\|T\left(\g_1, \ldots, \g_m\right)\right\|_{X^{p_0}} 
			\lesssim \|\h\|_{[X^{p_0}]^\prime}\prod_{j=1}^m\left\|\g_i\right\|_{X^{p_i}} .
		\end{align*}
		We conclude from $1/p_0=1/p_1+1/p_2+\dots+1/p_l+\dots+1/p_m$ that
		\begin{align}\label{ine indexs}
			\frac{1}{p_l^\prime}=\frac{1}{p_1}+\frac{1}{p_2}+\dots+\frac{1}{p_0^\prime}+\dots+\frac{1}{p_m}.
		\end{align}
		By the definition of $(T^*)_l$ \eqref{ine T*} and the equality \eqref{ine indexs}, we know the  operator $(T^*)_l$ also bounded from $X^{p_1}\times \dots \times X^{p_m}$ to $X^{p_0}$. The same proof works for $(T^*)_l$, which implies that  $\Pi_l\left(\h, \g_1, \ldots, \g_m\right)(x) \in L^1$.\\
		Furthermore, the direct calculation and the equality \eqref{ine T*} tell us that
		\begin{align*}
			&\quad \int_{\mathbb{R}^n} \Pi_l\left(\h, \g_1, \ldots, \g_m\right)(x) d x\\
			&=\int_{\mathbb{R}^n} \g_l(x) T_l^*\left(\g_1, \ldots, \g_{l-1}, \h, \g_{l+1}, \ldots, \g_m\right)(x) dx-\int_{\mathbb{R}^n}\h(x) T\left(\g_1, \ldots, \g_m\right)(x) dx =0 .
		\end{align*}
		Since the duality of Hardy space, it follows that 
		\begin{align*}
			\left\|\Pi_l\left(\h, \g_1, \ldots, \g_m\right)\right\|_{\mathbf H^1}\leq \sup_{\|b\|_{\rm BMO}\not= 0}\frac{\left|\int_{\mathbb{R}^n} b(x) \Pi_l\left(\h, \g_1, \ldots, \g_m\right)(x) d x\right|}{\|b\|_{\rm BMO}}.
		\end{align*}
		Hence, it suffices to show that for any $b \in \operatorname{BMO}$, we have
		\begin{align*}
			\left|\int_{\mathbb{R}^n} \Pi_l\left(\h, \g_1, \ldots, \g_m\right)(x)b(x) d x\right|=\left|\int_{\mathbb{R}^n} [b, T]_l\left(\g_1, \ldots, \g_m\right)(x)  \h(x) d x\right|\lesssim \|\h\|_{ [X^{p_0}]^\prime}\left\|\g_1\right\|_{X^{p_1}} \ldots\left\|\g_m\right\|_{X^{p_m}}\|b\|_{\mathrm{BMO}}.
		\end{align*}
		Therefore, from the H\"older inequality and the boundedness of $[b, T]_l$, we see that, for any $b \in \operatorname{BMO}$,
		\begin{align*}
			&\left|\int_{\mathbb{R}^n} [b, T]_l\left(\g_1, \ldots, \g_m\right)(x) \h(x)d x\right| \\
			& \leqslant \left\|[b, T]_l\left(\g_1, \ldots, \g_m\right)\right\|_{X^{p_0}} \|\h\|_{ [X^{p_0}]^\prime}\\
			& \lesssim \left\|\g_1\right\|_{X^{p_1}} \ldots\left\|\g_m\right\|_{X^{p_m}}\|\h\|_{ [X^{p_0}]^\prime}\|b\|_{\mathrm{BMO}} . 
		\end{align*}
		Consequently,
		$$
		\left\|\Pi_l\left(\h, \g_1, \ldots, \g_m\right)\right\|_{\mathbf H^1} \lesssim \|\h\|_{ [X^{p_0}]^\prime}\left\|\g_1\right\|_{X^{p_1}} \ldots\left\|\g_m\right\|_{X^{p_m}},
		$$
		which show that $\Pi_l\left(\h, \g_1, \ldots, \g_m\right)$ is an element of Hardy space $\mathbf H^1(\mathbb R^n)$.
	\end{proof}
	
	\begin{lemma}\label{lem characterization function}
		Let $X$ be a ball Banach function space and Hardy-Littlewood maximal operator $M$ is bounded on $X$. Then, for any $B\in \mathbb B$, we have
		\begin{align*}
			\l\|\chi_B\r\|_{X}\l\|\chi_B\r\|_{X^\prime}=\l|B\r|.
		\end{align*}
	\end{lemma}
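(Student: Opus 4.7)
I would prove the two-sided estimate
$$|B|\;\leq\;\|\chi_B\|_X\,\|\chi_B\|_{X'}\;\lesssim\;|B|,$$
so that the asserted identity is really an equivalence with constants absorbed (the upper implicit constant being governed by the $X\!\to\!X$ operator norm of the Hardy--Littlewood maximal function, which is precisely the standing hypothesis of the lemma).

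The lower bound is immediate from H\"older's inequality on the ball Banach function space $X$: applying \eqref{ine Holder} of Remark \ref{re ball Banach}(3) with $f=g=\chi_B$ (both belonging to $X$ and $X'$ respectively by property (4) of Definition \ref{ball Banach Def}) gives
$$|B|=\int_{\mathbb R^n}\chi_B(x)\chi_B(x)\,dx\;\leq\;\|\chi_B\|_X\,\|\chi_B\|_{X'}.$$

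For the reverse bound, my plan is to test the supremum defining the associate norm against the pointwise information that the maximal function gives on $B$. Fix any $g\in X$ with $\|g\|_X\leq 1$. For every $x\in B$ the ball $B$ is itself admissible in the supremum defining $M(g)(x)$, hence
$$\frac{1}{|B|}\int_B|g(y)|\,dy\;\leq\;M(g)(x)\qquad\text{for all }x\in B.$$
Multiplying both sides by $\chi_B(x)$, taking $X$-norms, and using property (2) of Definition \ref{ball Banach Def} together with the hypothesis $\|M(g)\|_X\lesssim\|g\|_X$, I would obtain
$$\frac{\|\chi_B\|_X}{|B|}\int_B|g(y)|\,dy\;\leq\;\|\chi_B\,M(g)\|_X\;\leq\;\|M(g)\|_X\;\lesssim\;\|g\|_X\leq 1.$$
Rearranging gives $\int_B|g(y)|\,dy\lesssim |B|/\|\chi_B\|_X$, and taking the supremum over all $g\in X$ with $\|g\|_X\leq 1$ in the definition of $\|\chi_B\|_{X'}$ yields $\|\chi_B\|_{X'}\lesssim |B|/\|\chi_B\|_X$, i.e. the desired upper estimate.

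The only place where a nontrivial hypothesis is used is the inequality $\|M(g)\|_X\lesssim\|g\|_X$; this is exactly the bridge that lets one pass from the pointwise lower bound on $M(g)$ over $B$ to a quantitative upper bound on $\|\chi_B\|_{X'}$. I therefore do not anticipate a substantive obstacle: everything else is bookkeeping with Definition \ref{ball Banach Def} and Remark \ref{re ball Banach}, and the only care required is in noting that monotonicity (property (2)) permits discarding the localizing factor $\chi_B$ inside the $X$-norm of $\chi_B\,M(g)$.
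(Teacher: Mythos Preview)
Your proof is correct and follows essentially the same approach as the paper: the lower bound via H\"older is identical, and for the upper bound both arguments rest on the pointwise inequality $\frac{1}{|B|}\int_B|g|\leq M(g)(x)$ for $x\in B$ together with the $X$-boundedness of $M$. Your execution is in fact slightly more direct---you take the $X$-norm of the pointwise bound immediately, whereas the paper passes through a level-set/Chebyshev step (setting $\lambda=\tfrac12|g|_B$ and using $B\subset\{M(g\chi_B)>\lambda\}$) before reaching the same conclusion.
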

	\begin{proof}
		On the one hand, from the H\"older inequality on ball Banach function space, we have
		\begin{align}\label{ine B1}
			\l|B\r|=\int_{\mathbb R^n}\chi_B(x)dx\leq \l\|\chi_B\r\|_{X}\l\|\chi_B\r\|_{X^\prime}.
		\end{align}
		On the other hand, for any $B\subset \mathbb B$ and $f\in L_{\rm loc}^1(\mathbb R^n)$
		\begin{align*}
			\frac{1}{\l|B\r|} \l\|\chi_B\r\|_{X}\l\|\chi_B\r\|_{X^\prime}\leq \sup_{\|g\|_X\leq 1}\frac{\l\|\chi_B\r\|_{X}}{\l|B\r|}\int_{\mathbb R^n} \l|g(x)\r|\chi_B(x)dx.
		\end{align*}
		Set $t=|g|_B=\frac{1}{|B|}\int_{\mathbb R^n} \l|g(x)\r|\chi_B(x)dx$ and $\lambda=t/2$. For any $x\in B$, the following inequality hold
		\begin{align*}
			\lambda=\frac{1}{2|B|}\int_{\mathbb R^n} \l|g(x)\r|\chi_B(x)dx\leq \frac{1}{2}M(g\chi_B)(x).
		\end{align*}
		Since the boundedness of  $M$, the following estimate can be obtained
		\begin{align}\label{ine measureB}
			\frac{\l\|\chi_B\r\|_{X}}{\l|B\r|}\int_{\mathbb R^n} \l|g(x)\r|\chi_B(x)dx&=\l\|\chi_B\r\|_{X} |g|_B \leq \l\|\chi_{\l\{x\in \mathbb R^n:M(g\chi_B)(x)>\lambda\r\}}\r\|_X|g|_B\leq \lambda^{-1}\l\|g\chi_B\r\|_X|g|_B\leq C.
		\end{align}
		Taking the supremum for the two sides of \eqref{ine measureB}, one gets that 
		\begin{align}\label{ine B2}
			\l\|\chi_B\r\|_{X}\l\|\chi_B\r\|_{X^\prime}\lesssim \l|B\r|.
		\end{align}
		Combining \eqref{ine B1} with \eqref{ine B2}, this is the desired conclusion. 
	\end{proof}
	\begin{lemma}\label{lem log}
		Let $1\leq l \leq m$, $1<p_1, \ldots, p_m<\infty$, $ 1/p_0=1/p_1+1/p_2+\dots+1/p_m$ and $1\leq 1/p_0<\infty$. Assume that $f \in \mathbf{H}^1\left(\mathbb{R}^n\right)$ can be represented as $f=\sum_{i = 1}^\infty \lambda_ia_i$. Then, for any atom $a_i(x)(i=1,2,\dots)$, there exists $\h^i\in [X^{p_0}]^\prime$ and $\g_{1}^i\in X^{p_1}, \ldots, \g_{m}^i\in X^{p_m}$  and enough large $M$ such that
		$$
		\left\|a_i-\Pi_l\left(\h^i, \g_1^i, \ldots, \g_m^i\right)\right\|_{\mathbf{H}^1} \lesssim \frac{\log M}{M^\delta},
		$$
		and
		$$
		\left\|\h^i\right\|_{[X^{p_0}]^\prime}\left\|\g_1^i\right\|_{X^{p_1}} \ldots\left\|\g_m^i\right\|_{X^{p_m}} \lesssim M^{m n}.
		$$
		
	\end{lemma}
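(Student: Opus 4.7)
The plan is to explicitly construct $\h^i, \g_1^i, \ldots, \g_m^i$ from the atom $a_i$, following the template of \cite{li2017weak, dao2023hardy, wang2023factorization} adapted to the ball Banach function space setting. Let $a_i$ be supported in $B_0 := B(x_0, r)$, so that $\|a_i\|_{L^\infty} \leq r^{-n}$ and $\int a_i = 0$. First I would choose balls $B_1, \ldots, B_m$ of radius $r$ with centres $x_1, \ldots, x_m$ satisfying $|x_0 - x_l| \approx Mr$ for each $l$, so that the $mn$-homogeneous hypothesis \eqref{ine mn-homogeneous} applies to $(B_0, B_1, \ldots, B_m)$. Setting $c_0 := T(\chi_{B_1}, \ldots, \chi_{B_m})(x_0)$, which satisfies $|c_0| \geq M^{-mn}$, I define
\begin{align*}
\h^i := -\frac{a_i}{c_0}, \qquad \g_j^i := \chi_{B_j} \quad (j=1,\ldots,m);
\end{align*}
this makes $\h^i$ supported in $B_0$, mean-zero, and of $L^\infty$-norm at most $r^{-n}M^{mn}$.

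Because $\h^i$ and $\chi_{B_l}$ have disjoint supports once $M$ is large, the error $a_i - \Pi_l(\h^i, \g_1^i, \ldots, \g_m^i)$ equals $a_i(x)[1 - T(\chi_{B_1}, \ldots, \chi_{B_m})(x)/c_0]$ on $B_0$, equals $-\chi_{B_l}(x) T_l^*(\chi_{B_1}, \ldots, \h^i, \ldots, \chi_{B_m})(x)$ on $B_l$, and vanishes elsewhere. On $B_0$ I would invoke \eqref{smooth condition} in the first kernel slot (varying $x$ against the fixed $x_0$) and integrate against $\chi_{B_1} \cdots \chi_{B_m}$ to obtain $|T(\chi_{B_1}, \ldots, \chi_{B_m})(x) - c_0| \lesssim M^{-(mn+\delta)}$; combining with $|c_0| \geq M^{-mn}$ bounds this first piece by $r^{-n}M^{-\delta}$. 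On $B_l$ I would exploit $\int \h^i = 0$ to rewrite $T_l^*(\ldots, \h^i, \ldots)(x)$ as the $z_l$-integral of a kernel difference, then apply \eqref{smooth condition} in the $l$-th slot to gain the factor $r^\delta/(Mr)^{mn+\delta}$; multiplying by $\int |\h^i| \lesssim M^{mn}$ and the trivial integrations $\int \chi_{B_j} = r^n$ for $j \neq l$ again produces $r^{-n}M^{-\delta}$.

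Since $\int a_i = 0$ and $\int \Pi_l(\h^i, \g_1^i, \ldots, \g_m^i) = 0$ (the latter already observed in the proof of Lemma \ref{lem proud_lH1}), the error has vanishing integral. After normalising by a constant multiple of $M^\delta$, the function $a_i - \Pi_l$ then meets the hypotheses of the Li--Wick lemma \cite{li2017weak} with the two balls $B_0$ and $B_l$ of radius $r$ at distance $\approx Mr$, delivering the first claimed bound $\|a_i - \Pi_l\|_{\mathbf H^1} \lesssim \log M/M^\delta$. For the product of norms, Lemma \ref{lem characterization function} applied to $X^{p_0}$ together with $\|\chi_B\|_{X^p} = \|\chi_B\|_X^{1/p}$ gives $\|\h^i\|_{[X^{p_0}]'} \lesssim M^{mn}/\|\chi_{B_0}\|_X^{1/p_0}$, while $\prod_j \|\g_j^i\|_{X^{p_j}} = \prod_j \|\chi_{B_j}\|_X^{1/p_j}$; the identity $1/p_0 = \sum_j 1/p_j$ combined with the comparability $\|\chi_{B_j}\|_X \approx \|\chi_{B_0}\|_X$ (for equal-radius balls whose centres lie within $O(Mr)$) collapses the product to the desired $M^{mn}$.

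The main obstacle will be the estimate on $B_l$: one must extract the full $M^{-\delta}$ from the smoothness in the $l$-th kernel slot against the mean-zero cancellation of $\h^i$, while simultaneously absorbing the blow-up $M^{mn}$ coming from $1/|c_0|$ in $\|\h^i\|_{L^1}$. A secondary subtlety, peculiar to the ball Banach function space setting, is the comparability $\|\chi_{B_j}\|_X \approx \|\chi_{B_0}\|_X$, which is where the generality of $X$ meets the geometric placement of the balls $B_j$; any $M$-dependent constant introduced there is harmless, being dominated by the factors $M^{mn}$ and $M^{-\delta}$ already tracked.
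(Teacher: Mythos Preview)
Your proof is correct and follows the same overall strategy as the paper. The one structural difference is a dual assignment: you put the scaled atom in the $\h$-slot and take $\g_l=\chi_{B_l}$, whereas the paper sets $\h=\chi_{B_l}$ and $\g_l=a/T_l^*(\chi_{B_1},\ldots,\chi_{B_m})(x_0)$, so that the roles of $T$ and $T_l^*$ in the two error pieces $\mathbf P_1,\mathbf P_2$ are interchanged; your version has the small advantage that the $mn$-homogeneity hypothesis \eqref{ine mn-homogeneous} is invoked for $T$ itself exactly as stated, while the paper must transfer it to $T_l^*$. For the product-of-norms bound the paper does not appeal to the translation comparability $\|\chi_{B_j}\|_X\approx\|\chi_{B_0}\|_X$ that you rely on (and which indeed need not hold with an $M$-independent constant for general $X$); instead it enlarges every $B_j$ and $B(x_0,r)$ to the common ball $(M+1)B_l$ and applies Lemma~\ref{lem characterization function} there. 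Either route in fact leaves a residual power of $M$ in the constant, which is harmless once $M$ is fixed in the proof of Theorem~\ref{the factorization}.
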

	
	\begin{proof}
		To study the general case, we write $a_i(x)=a(x)$ for any $1\leq i\leq \infty$.  As function $a(x)$ is atom we know that $\operatorname{supp}(a)\subset B(x_0,r)$ for some $x_0\in \mathbb R^n$ and 
		$$
		\|a\|_{L^{\infty}} \leq r^{-n} \quad \text { and } \quad \int_{\mathbb{R}^n} a(x) d x=0 .
		$$
		For any $1 \leqslant l \leqslant m$, taking the appropriate $y_l \in \mathbb{R}^n$ so that $\left|x_0-y_l\right|=M r$. Similarly to the relation of $x_0$ and $y_l$, we choose $y_1, y_2, \ldots, y_{l-1}, y_{l+1}, \ldots, y_m$ such that
		\begin{align}\label{ine relationship y_j}
			\left|x_0-y_l\right|=\left|y_j-y_l\right|=M r, \quad j=1, \ldots, m, j \neq l .
		\end{align}
		By the above choice, we can  check that  $B_i=B(y_j,r)$ and $B(x_0,r)$ are $(m + 1)-$pairwise disjoint balls.\\ 
		Now, for $T_l^*$ as defined in \eqref{ine T*}, let us set
		$$
		\left\{\begin{array}{l}
			\h(x)=\chi_{B_l}(x), \\
			\g_j(x)=\chi_{B_j}(x), \quad j \neq l, \\
			\g_l(x)=\frac{a(x)}{T_l^*\left(\g_1, \ldots, \g_{l-1}, \h, \g_{l+1}, \ldots, \g_m\right)\left(x_0\right)}.
		\end{array}\right.
		$$
		Combining the definition of $T_l^*$, the $mn$ homogeneity of $\mathrm{T}$ and the choice of the function $\g_1, \ldots, \g_{l-1}, \h, \g_{l+1}, \ldots, \g_m$ as above, we deduce that
		$$
		\left|T_l^*\left(\g_1, \ldots, \g_{l-1}, \h, \g_{l+1}, \ldots, \g_m\right)\left(x_0\right)\right| \geq C M^{-m n} .
		$$
		In fact, for any $j=1, 2, \dots, l-1, l+1, \dots, m$
		\begin{align}\label{ine normj1}
			\l\|\h\r\|_{[X^{p_0}]^\prime}=\l\|\chi_{B_l}\r\|_{[X^{p_0}]^\prime}\quad \text{and} \quad \l\|\g_j\r\|_{X^{p_j}}=\l\|\chi_{B_j}\r\|_{X^{p_j}}.
		\end{align} 
		Furthermore,
		$a$ is an atom, then
		\begin{align}\label{ine normj2}
			\l\|\g_l\r\|_{X^{p_l}}&=\l\|\frac{a}{\l(T^*\r)_l\l(\g_1, \dots, \g_{l-1}, \h, \g_{l+1}, \dots, \g_m\r)(x_0)}\r\|_{X^{p_l}}\lesssim M^{mn}\|a\|_{X^{p_l}}\lesssim M^{mn}\|a\|_{L^\infty}\|\chi_{B(x_0, r)}\|_{X^{p_l}}.
		\end{align}
		By the relation of $y_l$ and $y_j$ for any $j=1, \ldots, m, j \neq l$ in \eqref{ine relationship y_j}, we deduce that for any $1\leq j\leq m$, $B_j\subset (M+1)B_l$. Moreover, repeated application of Lemma \ref{lem characterization function} enables us to get 
		$$
		\|\h\|_{[X^{p_0}]^\prime}\left\|\g_1\right\|_{X^{p_l}} \ldots\left\|\g_m\right\|_{X^{p_m}} \lesssim M^{m n} \l|B(y_l,(M+1)r)\r|^{-1}\l\|\chi_{(M+1)B_j}\r\|_{X^{p_1}} \dots \l\|\chi_{(M+1)B_j}\r\|_{X^{p_m}}\lesssim M^{m n} .
		$$
		Now, we turn to claim that
		$$
		\left\|a-\Pi_l\left(\h, \g_1, \ldots, \g_m\right)\right\|_{\mathbf{H}^1} \lesssim \frac{\log M}{M^\delta} .
		$$
		We need only consider the following two parts
		\begin{align*}
			&\quad a(x)-\Pi_l\left(\h, \g_1, \ldots, \g_m\right)(x)\\
			&= 
			a(x)\left(1-\frac{T_l^*\left(\g_1, \ldots, \g_{l-1}, \h, \g_{l+1}, \ldots, \g_m\right)(x)}{T_l^*\left(\g_1, \ldots, \g_{l-1}, \h, \g_{l+1}, \ldots, \g_m\right)\left(x_0\right)}\right)+\h(x) T\left(\g_1, \ldots, \g_m\right)(x) \\
			&:=\mathbf{P}_1+\mathbf{P}_2,
		\end{align*}
		We write $\vec{z}_{(1, t)}=(z_1, z_2, \dots, z_t)$. Moreover, we conclude from the  smooth condition \eqref{smooth condition} of kernel $K$  that
		\begin{align*}
			\left|\mathbf{P}_1\right|= & |a(x)| \frac{\left|T_l^*\left(\g_1, \ldots, \g_{l-1}, \h, \g_{l+1}, \ldots, \g_m\right)\left(x_0\right)-T_l^*\left(\g_1, \ldots, \g_{l-1}, \h, \g_{l+1}, \ldots, \g_m\right)(x)\right|}{\left|T_l^*\left(\g_1, \ldots, \g_{l-1}, \h, \g_{l+1}, \ldots, \g_m\right)\left(x_0\right)\right|} \\
			\lesssim & M^{m n}\|a\|_{L^{\infty}} \int_{\Pi_{j=1}^m B_j} \mid K\left(\vec{z}_{(1, l-1)}, x_0, \vec{z}_{(l+1, m)}\right) -K\left(\vec{z}_{(1, l-1)}, x, \vec{z}_{(l+1, m)}\right) \mid d \vec{z} \\
			\lesssim & M^{m n} r^{-n} \int_{\Pi_{j=1}^m B_j} \frac{\left|x_0-x\right|^\delta}{\left(\sum_{i=1, i \neq l}^m\left|z_l-z_i\right|+\left|z_l-x_0\right|\right)^{m n+\delta}} d \vec{z} \\
			\lesssim &  M^{-\delta} r^{-n} ,
		\end{align*}
		Since the atom $a$ is support on $B(x_0,r)$, one obtains that
		$$
		\left|\mathbf{P}_1\right| \lesssim M^{-\delta} r^{-n} \chi_{B_{(x_0, r)}} .
		$$
		From the condition of atom $a$ and the  smooth condition \eqref{smooth condition} of kernel $K$,  one deduces that 
		\begin{align*}
			& \left|\mathbf{P}_2\right|=\chi_{B_l}\left|\int_{\Pi_{j=1, j \neq l}^m B_j \times B\left(x_0, r\right)} K\left(x, \vec{z}_{(1,m)}\right) \frac{a\left(z_l\right)}{T_l^*\left(\g_1, \ldots, \g_{l-1}, \h, \g_{l+1}, \ldots, \g_m\right)\left(x_0\right)} d \vec{z}\right| \\
			& \lesssim \chi_{B_l} M^{mn}\left|\int_{\Pi_{j=1, j \neq l}^m B_j \times B\left(x_0, r\right)}\left[K\left(x, \vec{z}_{(1,m)} \right)-K\left(x_0, \vec{z}_{(1,m)}\right)\right] a\left(z_l\right) d \vec{z}\right| \\
			& \lesssim \chi_{B_l} M^{mn} \|a\|_{L^{\infty} }\int_{\Pi_{j=1, j \neq l}^m B_j \times B\left(x_0, r\right)} \frac{\left|x-x_0\right|^\delta}{\left(\sum_{j=1}^m\left|x_0-z_j\right|\right)^{m n+\delta}} d \vec{z} \\
			& \lesssim M^{-\delta}r^{-n}\chi_{B_l}.
		\end{align*}
		According to the two parts $\mathbf {P}_1$ and $\mathbf {P}_2$, we gets that
		$$
		\left|a(x)-\Pi_l\left(\h, \g_1, \ldots, \g_m\right)(x)\right| \lesssim \frac{\left(\chi_{B_{(x_0, r)}}+\chi_{B_l}\right)}{M^{\delta} r^{n}}.
		$$
		Hence, the Lemma \ref{lem log} shows that  
		$$
		\left\|a-\Pi_l\left(\h, \g_1, \ldots, \g_m\right)\right\|_{\mathbf{H}^1} \lesssim  \frac{\log M}{M^\delta} .
		$$
		This proof is proven.
	\end{proof}
	We now prove the main Theorem \ref{the factorization}.
	\begin{proof}[Proof of Theorem \ref{the factorization}]
		Let us first prove that for any $f$ with the representation as \eqref{ine represent}, we have
		\begin{align*}
			\|f\|_{\mathbf{H}^1} \lesssim \inf \left\{\sum_{i=1}^{\infty} \sum_{j=1}^{\infty}\left|\lambda_{i,j}\right|\left\|\h_j^i\right\|_{[X^p]^\prime}\left\|\g_{j, 1}^i\right\|_{X^{p_1}} \dots \l\|\g_{j, m}^i\r \|_{X^{p_m}} \right\}.
		\end{align*} 
		From the Lemma \ref{lem proud_lH1}, the representation \eqref{ine represent} and sequences $\left\{\lambda_{i,j}\right\} \in \ell^1$, we know that
		\begin{align*}
			\|f\|_{\mathbf{H}^1}&=\lim_{n\rightarrow\infty}\lim_{m\rightarrow\infty}\sum_{i=1}^{n} \sum_{j=1}^{m} |\lambda_{i,j}|\l\| \Pi_l\left(\h_j^i, \g_{j, 1}^i, \ldots, \g_{j, m}^i\right)\r\|_{\mathbf{H}^1}\\
			%&\leq \lim_{n\rightarrow\infty}\lim_{m\rightarrow\infty}\sum_{i=1}^{n} \sum_{j=1}^{m} |\lambda_{i,j}|\|\h_j^i\|_{ [X^{p_0}]^\prime}\left\|\g_{j,1}^i\right\|_{X^{p_1}} \ldots\left\|\g_{j,m}^i\right\|_{X^{p_m}} \\
			&\leq \sum_{i=1}^{\infty} \sum_{j=1}^{\infty} |\lambda_{i,j}|\|\h_j^i\|_{ [X^{p_0}]^\prime}\left\|\g_{j,1}^i\right\|_{X^{p_1}} \ldots\left\|\g_{j,m}^i\right\|_{X^{p_m}},
		\end{align*}
		which implies that 
		\begin{align}\label{ine onehand}
			\|f\|_{\mathbf{H}^1}\leq \inf \l\{\sum_{i=1}^{\infty} \sum_{j=1}^{\infty} |\lambda_{i,j}|\|\h_j^i\|_{ [X^{p_0}]^\prime}\left\|\g_{j,1}^i\right\|_{X^{p_1}} \ldots\left\|\g_{j,m}^i\right\|_{X^{p_m}}\r\}.
		\end{align}

		We now turn to claim that for any $f\in \mathbf{H}^1(\mathbb R^n)$, there exists a representation as \ref{ine represent}, we have 
		\begin{align*}
			\sum_{i=1}^{\infty} \sum_{j=1}^{\infty}\left|\lambda_{i,j}\right|\left\|\h_j^i\right\|_{[X^p]^\prime}\left\|\g_{j, 1}^i\right\|_{X^{p_1}} \dots \l\|\g_{j, m}^i\r \|_{X^{p_m}} \lesssim 	\|f\|_{\mathbf{H}^1}.
		\end{align*}
		The Definition \ref{def H1} tells us that for any $f\in \mathbf{H}^1(\mathbb R^n)$ can be represented as a linear combination of atoms, i.e.
		$$
		f=\sum_{i= 1}^\infty \lambda_{i,1} a_{i,1} \quad \text{and }\quad \|f\|_{\mathbf{H}^1\left(\mathbb{R}^n\right)}=\inf \left\{\sum_{i = 1}^\infty\left|\lambda_{i,1}\right|: f=\sum_{i = 1}^\infty \lambda_{i,1} a_{i,1}\right\} .
		$$
		In fact, 
		\begin{align*}
			f(x)=\sum_{i=1}^\infty \lambda_{i,1}a_{i,1}(x)=\sum_{i=1}^\infty \lambda_{i,1}\l(a_{i,1}-\Pi_l\left(\h^i_1, \g_{1,1}^i, \ldots, \g_{1,m}^i\right)\r)+\sum_{i = 1}^\infty \lambda_{i,1}\Pi_l\left(\h^i_1, \g_{1,1}^i, \ldots, \g_{1,m}^i\right)=\mathbf{A}_1+\mathbf{B}_1.
		\end{align*}
		We apply the Lemma \ref{lem log} for all atom $a_{i,1}$, then, we know that there exist $\h^i_1\in [X^{p_0}]^\prime$ and $\g_{1,1}^i\in X^{p_1}, \ldots, \g_{1,m}^i\in X^{p_m}$, such that 
		$$
		\left\|a_{i,1}-\Pi_l\left(\h^i_1, \g_{1,1}^i, \ldots, \g_{1,m}^i\right)\right\|_{\mathbf{H}^1} \leq C \frac{\log M}{M^\delta} .
		$$
		Furthermore, for large enough $M$, the following inequalities hold
		\begin{align*}
			\left\|\mathbf{A}_1\right\|_{\mathbf{H}^1}=&\left\|f_1-\sum_{i= 1}^\infty \lambda_{i,1} \Pi_l\left(\h^i_1, \g_{1,1}^i, \ldots, \g_{1,m}^i\right)\right\|_{\mathbf{H}^1} \\
			& \leq \sum_{i= 1}^\infty \left|\lambda_{i,1}\right|\left\|a_{i,1}-\Pi_l\left(\h^i_1, \g_{1,1}^i, \ldots, \g_{1,m}^i\right)\right\|_{\mathbf{H}^1} \\
			&\lesssim  \frac{\log M}{M^\delta} \sum_{i= 1}^\infty \left|\lambda_{i,1}\right| 
			\leq \frac{1}{2}\|f\|_{\mathbf{H}^1},
		\end{align*}
		which implies that $\mathbf{A}_1\in \mathbf{H}^1$.\\
		Therefore,
		\begin{align*}
			\sum_{i =1}^\infty \left|\lambda_{i,1}\right|	\left\|\h^i_1\right\|_{[X^{p_0}]^\prime}\left\|\g_{1,1}^i\right\|_{X^{p_1}} \ldots\left\|\g_{1,m}^i\right\|_{X^{p_m}} \lesssim M^{m n}\|f\|_{\mathbf{H}^1}
		\end{align*}
		We set ~$\mathbf{A}_1=f_1$, then
		\begin{align*}
			f_1=f-\sum_{i= 1}^\infty \lambda_i \Pi_l\left(\h^i, \g_1^i, \ldots, \g_m^i\right) .
		\end{align*}
		Using the similar method, we know that for any $f_1\in \mathbf{H}^1(\mathbb R^n)$, we can find a $\{\lambda_{i,2}\}\in \ell^1$ and sequence of atom $\{a_{i,2}\}$, such that $f$ can be written as
		$$
		f_1=\sum_{i= 1}^\infty \lambda_{i,2} a_{i,2} \quad \text{and }\quad \|f_1\|_{\mathbf{H}^1\left(\mathbb{R}^n\right)}=\inf \left\{\sum_{i = 1}^\infty\left|\lambda_{i,2}\right|: f=\sum_{i = 1}^\infty \lambda_{i,2} a_{i,2}\right\} .
		$$
		Hence,
		\begin{align*}
			f_1(x)=\sum_{i=1}^\infty \lambda_{i,2}a_{i,2}(x)=\sum_{i=1}^\infty \lambda_{i,2}\l(a_{i,2}-\Pi_l\left(\h^i_2, \g_{2,1}^i, \ldots, \g_{2,m}^i\right)\r)+\sum_{i = 1}^\infty \lambda_{i,2}\Pi_l\left(\h^i_2, \g_{2,1}^i, \ldots, \g_{2,m}^i\right)=\mathbf{A}_2+\mathbf{B}_2.
		\end{align*}
		We apply the Lemma \ref{lem log} for all atom $a_{i,2}$, then, we know that there exist $\{\lambda_{i,j}\}\in \ell^1$ and $\h^i_2\in [X^{p_0}]^\prime$ and $\g_{2,1}^i\in X^{p_1}, \ldots, \g_{2,m}^i\in X^{p_m}$, such that 
		$$
		\left\|a_{i,2}-\Pi_l\left(\h^i_2, \g_{2,1}^i, \ldots, \g_{2,m}^i\right)\right\|_{\mathbf{H}^1} \leq C \frac{\log M}{M^\delta} .
		$$
		Furthermore, for large enough $M$, the following inequalities hold
		\begin{align*}
			&\quad \left\|f_1-\sum_{i= 1}^\infty \lambda_{i,2} \Pi_l\left(\h^i_2, \g_{2,1}^i, \ldots, \g_{2,m}^i\right)\right\|_{\mathbf{H}^1} \\
			& \leq \sum_{i= 1}^\infty \left|\lambda_{i,2}\right|\left\|a_{i,1}-\Pi_l\left(\h^i_1, \g_{2,1}^i, \ldots, \g_{2,m}^i\right)\right\|_{\mathbf{H}^1} \\
			&\lesssim  \frac{\log M}{M^\delta} \sum_{i= 1}^\infty \left|\lambda_{i,2}\right| 
			\leq \frac{1}{2^2}\|f\|_{\mathbf{H}^1},
		\end{align*}
		which implies that $\mathbf{A}_2\in \mathbf{H}^1$. \\
		Therefore,
		\begin{align*}
			\sum_{i =1}^\infty \left|\lambda_{i,2}\right|	\left\|\h^i_2\right\|_{[X^{p_0}]^\prime}\left\|\g_{2,1}^i\right\|_{X^{p_1}} \ldots\left\|\g_{2,m}^i\right\|_{X^{p_m}}  \lesssim M^{m n}\|f_1\|_{\mathbf{H}^1}\lesssim \frac{M^{m n}}{2^2}\|f\|_{\mathbf{H}^1}
		\end{align*}
		It is easy to see that 
		\begin{align*}
			f(x)=\mathbf{A}_1+\mathbf{B}_1=\mathbf{A_2}+\mathbf{B}_1+\mathbf{B}_2=\mathbf{A_2}+\sum_{i=1}^\infty \sum_{j=1}^2\lambda_{i,j} \Pi_l\left(\h^i_j, \g_{j,1}^i, \ldots, \g_{j,m}^i\right).
		\end{align*}
		Repeated application the above step for each function $f_i(x)~(1\leq i\leq K)$, then, there exist $\h^i_j\in [X^{p_0}]^\prime$ and $\g_{j,1}^i\in X^{p_1}, \ldots, \g_{j,m}^i\in X^{p_m}$, such that $\mathbf{A}_K\in \mathbf{H}^1$,
		\begin{align*}
			f(x)=\mathbf{A}_K+\sum_{i=1}^\infty \sum_{j=1}^i\lambda_{i,j} \Pi_l\left(\h^i_j, \g_{j,1}^i, \ldots, \g_{j,m}^i\right).
		\end{align*}
		and 
		\begin{align*}
			\sum_{i =1}^\infty \left|\lambda_{i,2}\right|	\left\|\h^i_j\right\|_{[X^{p_0}]^\prime}\left\|\g_{j,1}^i\right\|_{X^{p_1}} \ldots\left\|\g_{j,m}^i\right\|_{X^{p_m}} \lesssim \frac{M^{m n}}{2^i}\|f\|_{\mathbf{H}^1}.
		\end{align*}
		Thus, letting $N \rightarrow \infty$, it follows that
		\begin{align*}
			f(x)=\sum_{i=1}^\infty \sum_{j=1}^\infty \lambda_{i,j} \Pi_l\left(\h^i_j, \g_{j,1}^i, \ldots, \g_{j,m}^i\right)
		\end{align*}
		and 
		\begin{align}\label{ine hand}
			\sum_{i =1}^\infty \sum_{j =1}^\infty \left|\lambda_{i,2}\right|	\left\|\h^i_j\right\|_{[X^{p_0}]^\prime}\left\|\g_{j,1}^i\right\|_{X^{p_1}} \ldots\left\|\g_{j,m}^i\right\|_{X^{p_m}} \lesssim \|f\|_{\mathbf{H}^1}.
		\end{align}
		Combining the inequalities \eqref{ine onehand} and \eqref{ine hand}, yields
		\begin{align*}
			\|f\|_{\mathbf{H}^1}\approx \inf \l\{\sum_{i=1}^{\infty} \sum_{j=1}^{\infty} |\lambda_{i,j}|\|\h_j^i\|_{ [X^{p_0}]^\prime}\left\|\g_{j,1}^i\right\|_{X^{p_1}} \ldots\left\|\g_{j,m}^i\right\|_{X^{p_m}}\r\}.
		\end{align*}
		This proof is proven.
	\end{proof}

	\begin{proof}[Proof of Theorem \ref{the characterization BMO}]
		On the one hand, the boundedness of $T$ and $[b,T]_l$ implies that the upper bound holds.
		
		On the other hand, we first write that $ b_L(x)=b(x) \mathbf{1}_{B\left(x_0, L\right)}(x) $ for any $b\in L_{ {\rm loc }}^1$. Thanis to the fact that $\mathbf{H}^1\left(\mathbb{R}^n\right) \cap L_c^{\infty}\left(\mathbb{R}^n\right)$ is dense in $\mathbf{H}^1\left(\mathbb{R}^n\right)$.
		The Theorem \ref{the factorization} show that there exist sequences $\left\{\lambda_{i,j}\right\} \in \ell^1$ and functions $\h^i_j\in [X^{p_0}]^\prime$ and $\g_{j,1}^i\in X^{p_1}, \ldots, \g_{j,m}^i\in X^{p_m}$, such that  for any $f \in \mathbf{H}^1\left(\mathbb{R}^n\right) \cap L_c^{\infty}\left(\mathbb{R}^n\right)$, we get
		$$
		f=\sum_{i=1}^{\infty} \sum_{j=1}^{\infty} \lambda_j^i \Pi_l\left(\h_j^i, \g_{j, 1}^i, \ldots, \g_{j, m}^i\right) .
		$$
		At the same time, we have
		\begin{align*}
			\|f\|_{\mathbf{H}^1}\approx \sum_{i=1}^{\infty} \sum_{j=1}^{\infty} |\lambda_{i,j}|\|\h_j^i\|_{ [X^{p_0}]^\prime}\left\|\g_{j,1}^i\right\|_{X^{p_1}} \ldots\left\|\g_{j,m}^i\right\|_{X^{p_m}}.
		\end{align*}
		It is obvious that $\lim_{L\rightarrow \infty} b_L=b$, then, for any $f \in \mathbf{H}^1\left(\mathbb{R}^n\right) \cap L_c^{\infty}\left(\mathbb{R}^n\right)$, one concludes that 
		\begin{align*}
			\langle b, f\rangle=\lim _{L \rightarrow \infty}\left\langle b_L, f\right\rangle
			& =\sum_{i=1}^{\infty} \sum_{j=1}^{\infty} \lambda_{i,j} \lim _{L \rightarrow \infty}\left\langle b_L, \Pi_l\left(\h_j^i, \g_{j, 1}^i, \ldots, \g_{j, m}^i\right)\right\rangle \\
			& =\sum_{i=1}^{\infty} \sum_{j=1}^{\infty} \lambda_{i,j}\left\langle b, \Pi_l\left(\h_j^i, \g_{j, 1}^i, \ldots, \g_{j, m}^i\right)\right\rangle \\
			&=\sum_{i=1}^{\infty} \sum_{j=1}^{\infty} \lambda_{i,j} \int[b, T]_l\left(\g_{j, 1}^i, \ldots, \g_{j, m}^i\right)(x) \h_j^i(x) d x .
		\end{align*}
		Since $[b, T]_l$ maps $X^{p_1}\times \dots \times X^{p_m} \rightarrow X^{p_0}$, and the Lemma \ref{lem log}, we conclude that
		\begin{align*}
			& |\langle b, f\rangle| \leq \sum_{i=1}^{\infty} \sum_{j=1}^{\infty}\left|\lambda_{i,j}\right|\left\|[b, T]_l\left(\g_{j, 1}^i, \ldots, \g_{j, m}^i\right)\right\|_{X^{p_0}}\left\|\h_j^i\right\|_{\l[X^{p_0}\r]^\prime} \\
			& \lesssim\left\|[b, T]_l\right\|_{X^{p_1}\times \dots \times X^{p_m} \rightarrow X^{p_0}}  \sum_{i=1}^{\infty} \sum_{j=1}^{\infty} |\lambda_{i,j}|\|\h_j^i\|_{ [X^{p_0}]^\prime}\left\|\g_{j,1}^i\right\|_{X^{p_1}} \ldots\left\|\g_{j,m}^i\right\|_{X^{p_m}} \\
			&\lesssim\left\|[b, T]_l\right\|_{X^{p_1}\times \dots \times X^{p_m} \rightarrow X^{p_0}} \|f\|_{\mathbf{H}^1} ,
		\end{align*}
		which implies that the proof of this theorem is completed.
	\end{proof}
	\section{Some Examples}
	We will apply the factorizations theorem proved in the previous section to some classical spaces including ball Banach function spaces such as weighted Lebesgue spaces, variable Lebesgue spaces, mixed-norm Lebesgue spaces, etc., further enriching and developing the work about characterization on $H^1(\mathbb R^n)$ via operators on different spaces. In fact, the above spaces have been proved to be ball Banach function spaces, the details can be found in \cite{sawano2017hardy}. 
	
	\subsection{Weighted Lebesgue Spaces}
	Muckenhoupt weights are crucial in characterizing the boundedness of some integral operators on weighted Lebesgue spaces, to cope with the multilinear operators, Lerner et al. introduced multiple weights in \cite{lerner2009new}. Let's first recall the definition of weighted Lebesgue spaces and multiple weights.
	
	Let $1 \leqslant p_1, \ldots, p_m<\infty$. Given $\vec{w}=\left(w_1, \ldots, w_m\right)$, set $
	v_{\vec{w}}=\prod_{i=1}^m w_i^{p / p_i}.$
	The multiple weights $\vec{w}\in A_{\vec{P}}$ if
	$$
	\sup _Q\left(\frac{1}{|Q|} \int_Q v_{\vec{w}}\right)^{1 / p} \prod_{i=1}^m\left(\frac{1}{|Q|} \int_Q w_i^{1-p_i^{\prime}}\right)^{1 / p_i^{\prime}}<\infty.
	$$
	When $p_i=1,\left(\frac{1}{|Q|} \int_Q w_i^{1-p_i^{\prime}}\right)^{1 / p_i^{\prime}}$ is seen as $\left(\inf _Q w_i\right)^{-1}$.
	%\begin{lemma}\rm \label{lem 1}\cite{lerner2009new}
	%	Let $T$ be an m-linear Calder\'on-Zygmund operator and $b\in \rm BMO(\mathbb R^n)$. If $1<p_j<\infty, j=1, \ldots, m$, $1/p_=1/p_1+1/p_2+\dots+1/p_m$ and $\vec{w}\in A_{\vec{P}}$.
	%	Then, the $T$ and commutators $[b,T]$ are bounded from $L^{p_1}(w_1)\times L^{p_2}(w_2)\times \dots \times L^{p_m}(w_m)$ to $L^{p}(v_{\vec{w}})$.
	%\end{lemma}
	\begin{theorem}\label{the 1}
		Let $1<p, p_1, \ldots, p_m<\infty, 1/p_=1/p_1+1/p_2+\dots+1/p_m$ and $\vec{w} \in A_{\vec{P}}$. Then for any $f \in$ $\mathbf H^1\left(\mathbb{R}^n\right)$, there exists sequences $\left\{\lambda_{j,i}\right\} \in \ell^1$ and functions $\h_j^i \in L^{p^{\prime}}\left(\nu_{\vec{w}}^{1-p^{\prime}}\right), \g_{j, 1}^i \in L^{p_1}\left(w_1\right), \ldots, \g_{j, m}^i \in$ $L^{p_m}\left(w_m\right)$ such that $f$ can be represented as \eqref{ine represent}. Moreover, the equivalence of the norm holds
		$$
		\|f\|_{\mathbf{H}^1} \approx \inf \left\{\sum_{i=1}^{\infty} \sum_{j=1}^{\infty}\left|\lambda_{i,j}\right|\left\|\h_j^i\right\|_{L^{p^{\prime}}\left(\nu_{\vec{w}}^{1-p^{\prime}}\right)}\left\|\g_{j, 1}^i\right\|_{L^{p_1}\left(w_1\right)} \dots \l\|\g_{j, m}^i\r \|_{L^{p_m}\left(w_m\right)}\right\}.
		$$
	\end{theorem}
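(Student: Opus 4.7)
The plan is to derive Theorem \ref{the 1} as a direct specialization of the abstract weak factorization Theorem \ref{the factorization}. Concretely, I would apply Theorem \ref{the factorization} with the identifications $X^{p_i}=L^{p_i}(w_i)$ for $i=1,\ldots,m$ and $X^{p_0}=L^{p}(\nu_{\vec w})$, so that the exponent $p$ in Theorem \ref{the 1} plays the role of $p_0$. To justify this identification, two structural facts need to be recorded: first, each weighted Lebesgue space $L^{q}(w)$ with $w$ a locally integrable weight is a ball Banach function space, a case treated explicitly in \cite{sawano2017hardy}; and second, the associate space of $L^{p}(\nu_{\vec w})$ is $L^{p'}(\nu_{\vec w}^{1-p'})$ by the standard weighted duality, which matches the target space of the $\h_j^i$ in the conclusion.

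The analytic input to be verified is the boundedness of $T$ and of $[b,T]_l$ from $L^{p_1}(w_1)\times\cdots\times L^{p_m}(w_m)$ into $L^{p}(\nu_{\vec w})$. For $T$ this is exactly the main theorem of Lerner et al.\ \cite{lerner2009new}, which is the reason the multilinear Muckenhoupt class $A_{\vec P}$ is invoked in the hypothesis. For the commutator $[b,T]_l$ with $b\in\mathrm{BMO}$, the corresponding multilinear weighted bound follows from the extensions of \cite{lerner2009new} to commutators; I would cite that result and thereby supply the second hypothesis of Theorem \ref{the factorization}.

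With both boundedness inputs in hand, Theorem \ref{the factorization} applies directly and yields, for any $f\in\mathbf H^1(\mathbb R^n)$, a decomposition
\[
f=\sum_{i=1}^\infty\sum_{j=1}^\infty\lambda_{i,j}\,\Pi_l\bigl(\h_j^i,\g_{j,1}^i,\ldots,\g_{j,m}^i\bigr)
\]
together with the equivalence of norms. Translating $[X^{p_0}]'$ as $L^{p'}(\nu_{\vec w}^{1-p'})$ and $X^{p_i}$ as $L^{p_i}(w_i)$ recovers precisely the statement of Theorem \ref{the 1}.

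The only substantive point I expect to require some care is the compatibility of the abstract framework with having \emph{distinct} weights $w_1,\ldots,w_m$ in the different slots, rather than the factor spaces arising as powers of a single base space $X$. Rereading the proof of Theorem \ref{the factorization}, however, no step uses that the $X^{p_i}$ come from a common $X$: the argument only invokes, slot by slot, the ball Banach function space properties of each factor, the H\"older inequality on such spaces, and the two boundedness assumptions on $T$ and $[b,T]_l$. Consequently the same scheme runs verbatim for the product $L^{p_1}(w_1)\times\cdots\times L^{p_m}(w_m)\to L^{p}(\nu_{\vec w})$, and the main obstacle reduces to the citational task of locating a clean reference for the weighted commutator bound on the multilinear $A_{\vec P}$ class.
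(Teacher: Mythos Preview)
Your proposal is correct and matches the paper's own treatment essentially line for line: the paper states that Theorem~\ref{the 1} follows from Theorem~\ref{the factorization} together with Corollary~3.8 and Corollary~3.9 of \cite{lerner2009new} (which supply precisely the boundedness of $T$ and of $[b,T]_l$ on the weighted product space under the $A_{\vec P}$ hypothesis), and omits further details. Your additional remark that the abstract argument never actually uses that the slot spaces $X^{p_i}$ are powers of a common $X$ is a valid and helpful observation that the paper leaves implicit.
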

	\begin{theorem}\label{the 2}
		Let $1 \leq l \leq m, 1<p, p_1, \ldots, p_m<\infty, 1/p_=1/p_1+1/p_2+\dots+1/p_m$ and $\vec{w} \in A_{\vec{P}}$. Then, the following propositions are equivalent
		\begin{enumerate*}
			\item[\rm (1)] $b \in {\rm BMO}\left(\mathbb{R}^n\right)$;
			\item[\rm (2)] the commutator $[b, T]_l$ is bounded from $L^{p_1}(w_1)\times L^{p_2}(w_2)\times \dots \times L^{p_m}(w_m)$ to $L^{p}(v_{\vec{w}})$.
		\end{enumerate*}
	\end{theorem}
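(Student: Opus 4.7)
The plan is to deduce Theorem \ref{the 2} as a direct application of Theorem \ref{the characterization BMO} together with the classical multilinear weight theory. The main work is the correct identification of the ball Banach function space scaffolding in the weighted setting and the verification that the hypotheses of Theorem \ref{the characterization BMO} are in force.

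First, I would set up the ball Banach function space framework. Take the ambient space to be chosen so that the $p_i$-convexifications satisfy $X^{p_i}=L^{p_i}(w_i)$ for $i=1,\ldots,m$ and $X^{p_0}=L^{p_0}(v_{\vec w})$ with $p_0=p$. One checks that $L^{p}(w)$ verifies the five axioms of Definition \ref{ball Banach Def} whenever $w\in A_\infty$; for $\vec w\in A_{\vec P}$ we have $w_i\in A_{mp_i}$ and $v_{\vec w}\in A_{mp}$ by the results of Lerner et al., so each space involved is indeed a ball Banach function space. The boundedness of the Hardy--Littlewood maximal operator on these spaces (needed for Lemma \ref{lem characterization function}) is standard in the $A_p$ regime. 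The associate space is identified via the duality of weighted Lebesgue spaces as $[X^{p_0}]'=L^{p_0'}(v_{\vec w}^{1-p_0'})$.

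Second, for the implication (1) $\Rightarrow$ (2), I would invoke the classical multilinear Calder\'on--Zygmund theory with multiple weights (Lerner--Ombrosi--P\'erez--Torres--Trujillo-Gonz\'alez): if $\vec w\in A_{\vec P}$ and $b\in \mathrm{BMO}$, then both $T$ and $[b,T]_l$ map $L^{p_1}(w_1)\times\cdots\times L^{p_m}(w_m)$ into $L^{p}(v_{\vec w})$. This gives the easy direction and, simultaneously, supplies the background boundedness of $T$ itself that is needed as a hypothesis for applying Theorem \ref{the characterization BMO}.

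Third, for the converse (2) $\Rightarrow$ (1), I would apply Theorem \ref{the characterization BMO} with the identification from Step~1. The boundedness of $T$ is provided by Step~2 (it does not involve $b$), while the boundedness of $[b,T]_l$ is precisely the hypothesis in (2). Under the $mn$-homogeneity of $T$, which holds for the multilinear Calder\'on--Zygmund operators under consideration, Theorem \ref{the characterization BMO} yields $b\in \mathrm{BMO}(\mathbb R^n)$, completing the proof.

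The main obstacle, such as it is, lies not in new analysis but in the bookkeeping: verifying that the weighted Lebesgue scale genuinely fits the ball Banach function space axioms and their $p$-convexifications, and checking that the associate space computation yields the correct conjugate weight $v_{\vec w}^{1-p'}$. Once these identifications are in place, Theorem \ref{the 2} is a corollary of Theorem \ref{the characterization BMO} combined with the known weighted boundedness of multilinear commutators.
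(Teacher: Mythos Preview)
Your proposal is correct and follows essentially the same approach as the paper: the authors dispose of Theorem \ref{the 2} (together with Theorem \ref{the 1}) in a single sentence, citing Corollary~3.8 and Corollary~3.9 of Lerner et al.\ for the weighted boundedness of $T$ and $[b,T]_l$, and then invoking Theorems \ref{the factorization} and \ref{the characterization BMO}. Your added bookkeeping (checking the ball Banach axioms for $L^p(w)$, identifying $[X^{p_0}]'=L^{p'}(v_{\vec w}^{\,1-p'})$, and noting $w_i\in A_{mp_i}$, $v_{\vec w}\in A_{mp}$) fills in exactly the details the paper omits, but the architecture of the argument is identical.
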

	
	The proofs of Theorem \ref{the 1} and Theorem \ref{the 2} can be obtained by Corollary 3.8 and Corollary 3.9 in \cite{lerner2009new}, Theorem \ref{the factorization} and Theorem \ref{the characterization BMO} in this paper, hence, we omit the details.
	\begin{remark}\rm
		Wang et al. in \cite{wang2023factorization} proved the Theorem \ref{the 1} and \ref{the 2}, which can reduce to the results in \cite{li2017weak} by Li et al. on Lebesgue space. In addition, He et al. \cite{he2023factorization} characterized the weighted Hardy spaces via the multilinear Calder\'on-Zygmund operator on weighted Lebesgue spaces.
	\end{remark}
	
	\subsection{Variable Lebesgue Spaces}
	The variable Lebesgue space $L^{p(\cdot)}(\mathbb R^n)$ was first introduced by Kov{\'a}{\v{c}}ik and R{\'a}kosn{\'\i}k \cite{kovavcik1991spaces} for solving problems related to variational integrals in physics, we first recall its definition.
	
	We define the variable Lebesgue space $L^{p(\cdot)}\left(\mathbb{R}^n\right)$ is the set as follows,
	\begin{align*}
		L^{p(\cdot)}\left(\mathbb{R}^n\right) =\l\{f: \|f\|_{L^{p(\cdot)}}:=\inf \left\{\lambda: \int_{\mathbb{R}^n}\left[\frac{|f(x)|}{\lambda}\right]^{p(x)} d x \leq 1\right\}<\infty \r\},
	\end{align*}
	where $\lambda\in (0,\infty)$ and the function $p(\cdot): \mathbb{R}^n \rightarrow(0, \infty)$ be a measurable function. Furthermore, the $p^{\prime}(\cdot)$ denotes the conjugate exponent of $p(\cdot)$, which also tells us that the dual space of  $L^{p(\cdot)}\left(\mathbb{R}^n\right)$ is $L^{p(\cdot)^\prime}(\mathbb R^n)$. 
	
	We call the $p(\cdot)\in \mathcal{P}\left(\mathbb{R}^n\right)$, if the function $p(\cdot)$ satisfies that for any $x\in \mathbb R^n$ such that
	$$
	p_{-}:=\inf _{x \in \mathbb{R}^n} p(x)>1, \quad p_{+}:=\sup _{x \in \mathbb{R}^n} p(x)<\infty .
	$$
	
	Moreover, we define the $\mathcal B(\mathbb R^n)$ as Theorem 1.1 in \cite{cruz2006boundedness}, which ensures that the maximal operators is bounded on $L^{p(\cdot)}\left(\mathbb{R}^n\right)$. Especially, the Corollary 2.1 and the Corollary 2.2 in \cite{huang2010multilinear} is established by Huang and Xu, which provides boundedness of multilinear operator and yields a condition about the following theorems hold.
	
	%\begin{lemma}\rm \label{lem 2}\cite{huang2010multilinear}
	%	Let $T$ be an m-linear Calder\'on-Zygmund operator, $p(\cdot)\in \mathcal{P}(\mathbb R^n)$, $p_i(\cdot)\in \mathcal{B}(\mathbb R^n)$ for $i=1, 2,\dots, m$ and $1/p(\cdot)=1/p_1(\cdot)+1/p_2(\cdot)+\dots+1/p_m(\cdot)$.
	%	Then, the Calder\'on-Zygmund operator $T$ and commutators $[b,T]$ generated by $T$ and $\rm BMO$ function $b$ are bounded from $L^{p_1(\cdot)}\times \dots \times L^{p_m(\cdot)}$ to $L^{p(\cdot)}$.
	%\end{lemma}
	\begin{theorem}\label{the 3}
		Let $p(\cdot)\in \mathcal{P}(\mathbb R^n)$, $p_i(\cdot)\in \mathcal{B}(\mathbb R^n)$ for $i=1, 2,\dots, m$ and $1/p(\cdot)=1/p_1(\cdot)+1/p_2(\cdot)+\dots+1/p_m(\cdot)$. Then for any $f \in$ $\mathbf H^1\left(\mathbb{R}^n\right)$, there exists sequences $\left\{\lambda_{j,i}\right\} \in \ell^1$ and functions $\h_j^i \in L^{p^{\prime}(\cdot)}, \g_{j, 1}^i \in L^{p_1(\cdot)}, \ldots, \g_{j, m}^i \in L^{p_m(\cdot)}$ such that $f$ can be represented as \eqref{ine represent}. Moreover, the equivalence of the norm holds
		$$
		\|f\|_{\mathbf{H}^1} \approx \inf \left\{\sum_{i=1}^{\infty} \sum_{j=1}^{\infty}\left|\lambda_{i,j}\right|\left\|\h_j^i\right\|_{L^{p^{\prime}(\cdot)}}\left\|\g_{j, 1}^i\right\|_{L^{p_1(\cdot)}} \dots \l\|\g_{j, m}^i\r \|_{L^{p_m(\cdot)}}\right\}.
		$$
	\end{theorem}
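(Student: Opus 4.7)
The strategy is to reduce Theorem \ref{the 3} directly to Theorem \ref{the factorization} by specializing the abstract ball Banach function space framework to variable Lebesgue spaces. Since Theorem \ref{the factorization} is purely an abstract statement whose only concrete inputs are (i) that the domain and target are ball Banach function spaces with a well-identified associate space, and (ii) that $T$ and $[b,T]_l$ satisfy the required multilinear boundedness, the entire argument reduces to checking these two ingredients in the variable-exponent setting.

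First I would identify the relevant function spaces. The variable Lebesgue space $L^{p_i(\cdot)}(\mathbb{R}^n)$ with $p_i(\cdot)\in\mathcal{B}(\mathbb{R}^n)$ is a ball Banach function space in the sense of Definition \ref{ball Banach Def}; this is recorded in \cite{sawano2017hardy}. The classical K\"ova\v{c}ik--R\'akosn\'ik duality \cite{kovavcik1991spaces} identifies the associate space of $L^{p(\cdot)}$ with $L^{p'(\cdot)}$, so the slot $[X^{p_0}]'$ in Theorem \ref{the factorization} is realized by $L^{p'(\cdot)}$ here. The H\"older inequality \eqref{ine Holder} and the triangle inequality \eqref{ine 2.1} then take their standard variable-exponent forms.

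Second, I would supply the operator-theoretic hypotheses. Under the conditions $p(\cdot)\in\mathcal{P}(\mathbb{R}^n)$, $p_i(\cdot)\in\mathcal{B}(\mathbb{R}^n)$, and $1/p(\cdot)=\sum_{i=1}^{m}1/p_i(\cdot)$, Corollary 2.1 of Huang and Xu \cite{huang2010multilinear} provides the boundedness of the multilinear Calder\'on--Zygmund operator $T$ from $L^{p_1(\cdot)}\times\cdots\times L^{p_m(\cdot)}$ into $L^{p(\cdot)}$, and Corollary 2.2 of the same paper gives the corresponding estimate for the commutator $[b,T]_l$ whenever $b\in\mathrm{BMO}(\mathbb{R}^n)$. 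These are exactly the boundedness assumptions required by Theorem \ref{the factorization}.

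With both sets of hypotheses in hand, one invokes Theorem \ref{the factorization} directly: for every $f\in\mathbf{H}^1(\mathbb{R}^n)$ it yields sequences $\{\lambda_{i,j}\}\in\ell^1$ and functions $\h_j^i\in L^{p'(\cdot)}$, $\g_{j,k}^i\in L^{p_k(\cdot)}$ giving the representation \eqref{ine represent} together with the equivalence of norms. There is no genuine obstacle in the proof; the only care required is to confirm that the scaling notation $X^{p_i}$ of the abstract theorem is used purely as a label for the $i$-th ball Banach function space in the product (as the abstract proof never invokes a \emph{common} base $X$ beyond the properties listed in Remark \ref{re ball Banach}), so that assigning $X^{p_i}\leftrightarrow L^{p_i(\cdot)}$ and $X^{p_0}\leftrightarrow L^{p(\cdot)}$ is legitimate. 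Once that identification is granted, the theorem follows with no further work, which is why the details are omitted in the analogous weighted case of Theorems \ref{the 1} and \ref{the 2}.
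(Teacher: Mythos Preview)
Your proposal is correct and matches the paper's own approach: the paper treats Theorem \ref{the 3} as a direct specialization of Theorem \ref{the factorization}, with the required boundedness of $T$ and $[b,T]_l$ on variable Lebesgue spaces supplied by Corollaries 2.1 and 2.2 of Huang and Xu \cite{huang2010multilinear}, and no further details given. Your additional remark about reading the $X^{p_i}$ notation as labels for the individual ball Banach function spaces is a fair clarification of how the abstract theorem is being applied.
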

	\begin{theorem}\label{the 4}
		Let $1 \leq l \leq m$, $p(\cdot)\in \mathcal{P}(\mathbb R^n)$, $p_i(\cdot)\in \mathcal{B}(\mathbb R^n)$ for $i=1, 2,\dots, m$ and $1/p(\cdot)=1/p_1(\cdot)+1/p_2(\cdot)+\dots+1/p_m(\cdot)$. Then, the following propositions are equivalent
		\begin{enumerate*}
			\item[\rm (1)] $b \in {\rm BMO}\left(\mathbb{R}^n\right)$;
			\item[\rm (2)] the commutator $[b, T]_l$ is bounded from $L^{p_1(\cdot)}\times \dots \times L^{p_m(\cdot)}$ to $L^{p(\cdot)}$.
		\end{enumerate*}
	\end{theorem}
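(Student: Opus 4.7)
The plan is to derive Theorem \ref{the 4} as a direct specialization of Theorem \ref{the characterization BMO} to the variable Lebesgue setting, following the same template already used to deduce Theorem \ref{the 2} from the abstract results. First I would record the structural prerequisites: for $p(\cdot)\in \mathcal P(\mathbb R^n)$ the variable Lebesgue space $L^{p(\cdot)}(\mathbb R^n)$ is a ball Banach function space in the sense of Definition \ref{ball Banach Def}, its associate space is $L^{p^\prime(\cdot)}(\mathbb R^n)$, and if moreover $p(\cdot)\in \mathcal B(\mathbb R^n)$ then by Theorem 1.1 of \cite{cruz2006boundedness} the Hardy--Littlewood maximal operator is bounded on $L^{p(\cdot)}$; this is precisely the hypothesis needed to apply Lemma \ref{lem characterization function}. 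These facts place the variable Lebesgue spaces squarely within the abstract framework of Section 2, via the identification $X^{p_i}\leftrightarrow L^{p_i(\cdot)}$ and $X^{p_0}\leftrightarrow L^{p(\cdot)}$ under the scaling relation $1/p(\cdot)=\sum_{i=1}^m 1/p_i(\cdot)$.

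The forward implication (1)$\Rightarrow$(2) is then an immediate citation: given $b\in {\rm BMO}(\mathbb R^n)$, Corollary 2.2 of Huang--Xu \cite{huang2010multilinear} furnishes the boundedness of the multilinear commutator $[b,T]_l$ from $L^{p_1(\cdot)}\times\cdots\times L^{p_m(\cdot)}$ into $L^{p(\cdot)}$ under exactly the hypotheses $p(\cdot)\in \mathcal P(\mathbb R^n)$ and $p_i(\cdot)\in \mathcal B(\mathbb R^n)$ present in Theorem \ref{the 4}, so nothing further is required for this direction.

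For the converse (2)$\Rightarrow$(1), I would apply Theorem \ref{the characterization BMO} with the ball Banach function space structure fixed above. Its hypotheses demand that both $T$ and $[b,T]_l$ be bounded on the given product of ball Banach function spaces: boundedness of $T$ itself is supplied by Corollary 2.1 of \cite{huang2010multilinear}, while boundedness of $[b,T]_l$ is precisely the assumption of (2). Theorem \ref{the characterization BMO} then forces $b\in {\rm BMO}(\mathbb R^n)$, completing the equivalence. The principal, and essentially only, non-routine point is the notational compatibility between the abstract ``$X^{p_i}$''-scale used in Theorem \ref{the characterization BMO} and the concrete variable-exponent scale $L^{p_i(\cdot)}$ appearing here; once the identification above is recorded, every ingredient in Theorem \ref{the characterization BMO}---the atomic decomposition of $\mathbf{H}^1$, the duality pairing against ${\rm BMO}$, and the truncation $b_L = b\,\mathbf 1_{B(x_0,L)}$ used to justify swapping limits and sums---transfers without modification, so the result follows. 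I anticipate no deeper obstacle; the content of Theorem \ref{the 4} is a transcription of Theorem \ref{the characterization BMO} through the dictionary provided by \cite{sawano2017hardy}, \cite{cruz2006boundedness} and \cite{huang2010multilinear}.
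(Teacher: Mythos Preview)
Your proposal is correct and follows essentially the same route as the paper: the paper does not write out a separate proof of Theorem \ref{the 4} but indicates that it follows from Theorem \ref{the characterization BMO} once the boundedness of $T$ and $[b,T]_l$ on the variable-exponent product is supplied by Corollary 2.1 and Corollary 2.2 of \cite{huang2010multilinear}, together with the maximal-operator input from \cite{cruz2006boundedness}. Your added remark about the notational compatibility between the abstract $X^{p_i}$-scale and the concrete $L^{p_i(\cdot)}$-scale is a fair caveat, but as you note the actual proof of Theorem \ref{the characterization BMO} only uses the ball Banach function space structure and the assumed boundedness, so the transcription goes through.
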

	\begin{remark}\rm
		The weak factorizations of Hardy spaces on the variable Lebesgue spaces have been established by Wang in \cite{wang2019notes}. In fact, the Theorem \ref{the 3} and Theorem \ref{the 4} return to the results in \cite{wang2019notes} when $m=1$.
	\end{remark}
	\subsection{Herz Spaces}
	We say a function $f$ in the homogeneous Herz space $\dot{K}_q^{\alpha, p}\left(\mathbb{R}^n\right)$, if $f\in L_{\text {loc }}^q\left(\mathbb{R}^n \backslash\{0\}\right)$ satisfies that
	$$
	\|f\|_{\dot{K}_q^{\alpha, p}}:=\left\{\sum_{k \in \mathbb{Z}} 2^{k p \alpha}\left\|f \chi_k\right\|_{L^q}^p\right\}^{1 / p}<\infty.
	$$
	
	The study of Herz spaces has continued since the 1970s, then, Lu and Yang defined Herz spaces, and at the same time systematically studied the characterization and theory of operator on Herz spaces and Herz-Hardy spaces, the details refer to \cite{lu2008herz}. After that, the theory of operators and the theory of spaces on Herz space are becoming increasingly rich. Hence, Theorem 3 and Theorem 4 in \cite{hussain2014multilinear} ensure the following theorems hold.
	
	%\begin{lemma}\rm \cite{hussain2014multilinear}\label{lem Herz boundedness}
	%	Let $T$ be an m-linear Calder\'on-Zygmund operator, $\alpha_i\in \mathbb R$, $1< q_i<\infty$ and $0<p_i<\infty$ for $i=0, 1, \dots, m$ satisfy $-1/q_i<\alpha_i<-1/q_i^{\prime}$. Then, for any $1/p_0=1/p_1+1/p_2+\dots+1/p_m$, $1/q_0=1/q_1+1/q_2+\dots+1/q_m$ and $1/\alpha_0=1/\alpha_1+1/\alpha_2+\dots+1/\alpha_m$, the Calder\'on-Zygmund operator $T$ and commutators $[b,T]$ generated by $T$ and $\rm BMO$ function $b$ are bounded from $	\dot{K}_{q_1}^{\alpha_1, p_1}\times \dots \times \dot{K}_{q_m}^{\alpha_m, p_m}$ to	$\dot{K}_{q_0}^{\alpha_0, p_0}$.
	%\end{lemma}
	\begin{theorem}\label{the 5}
		Let $\alpha_i\in \mathbb R$, $1< q_i<\infty$ and $0<p_i<\infty$ for $i=0, 1, \dots, m$ satisfy $-1/q_i<\alpha_i<-1/q_i^{\prime}$. Suppose that $1/p_0=1/p_1+1/p_2+\dots+1/p_m$, $1/q_0=1/q_1+1/q_2+\dots+1/q_m$ and $1/\alpha_0=1/\alpha_1+1/\alpha_2+\dots+1/\alpha_m$. Then, for any $f \in$ $\mathbf H^1\left(\mathbb{R}^n\right)$, there exists sequences $\left\{\lambda_{j,i}\right\} \in \ell^1$ and functions $\h_j^i \in \dot K_{q_0^\prime}^{-\alpha_0,p_0^\prime}, \g_{j, 1}^i\in \dot K_{q_1}^{\alpha_1,p_1}, \ldots, \g_{j, m}^i \in  \dot K_{q_m}^{\alpha_m,p_m}$ such that $f$ can be represnted as \eqref{ine represent}. Furthermore,
		$$
		\|f\|_{\mathbf{H}^1} \approx \inf \left\{\sum_{i=1}^{\infty} \sum_{j=1}^{\infty}\left|\lambda_{i,j}\right|\left\|\h_j^i\right\|_{\dot K_{q_0^\prime}^{-\alpha_0,p_0^\prime}}\left\|\g_{j, 1}^i\right\|_{\dot K_{q_1}^{\alpha_1,p_1}} \dots \l\|\g_{j, m}^i\r \|_{\dot K_{q_m}^{\alpha_m,p_m}}\right\}.
		$$
	\end{theorem}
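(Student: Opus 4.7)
The plan is to deduce Theorem \ref{the 5} as a direct specialization of the abstract Theorem \ref{the factorization}, exactly mirroring the template used for weighted and variable Lebesgue spaces in the preceding subsections. The proof assembles three ingredients, each of which is essentially a citation, and then invokes Theorem \ref{the factorization} verbatim; no fresh analytic estimate is required.

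First, I would record that each homogeneous Herz space $\dot K_{q_i}^{\alpha_i,p_i}$ with $1<q_i<\infty$, $0<p_i<\infty$, and $-1/q_i<\alpha_i<-1/q_i'$ satisfies properties (1)--(5) of Definition \ref{ball Banach Def}, a fact listed among the examples in Sawano et al.~\cite{sawano2017hardy}, so that the abstract framework applies. Next, I would identify the associate space via the classical Lu--Yang duality for Herz spaces, giving
$$\bigl[\dot K_{q_0}^{\alpha_0,p_0}\bigr]' = \dot K_{q_0'}^{-\alpha_0,\,p_0'},$$
which is precisely the space into which the factor $\h_j^i$ is required to land in the statement of Theorem \ref{the 5}. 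Finally, I would invoke Theorems~3 and~4 of Hussain-Sarfraz~\cite{hussain2014multilinear}, which, under the stated index relations on $(q_i,p_i,\alpha_i)$, supply the boundedness of the multilinear Calder\'on-Zygmund operator $T$ from $\dot K_{q_1}^{\alpha_1,p_1}\times\cdots\times\dot K_{q_m}^{\alpha_m,p_m}$ into $\dot K_{q_0}^{\alpha_0,p_0}$, and the analogous boundedness of the commutator $[b,T]_l$ whenever $b\in\mathrm{BMO}$.

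With these three inputs in place, all hypotheses of Theorem \ref{the factorization} are satisfied upon the natural identification of $X^{p_i}$ with $\dot K_{q_i}^{\alpha_i,p_i}$ for $i=1,\dots,m$ and of $[X^{p_0}]'$ with $\dot K_{q_0'}^{-\alpha_0,p_0'}$; the desired representation \eqref{ine represent} and the equivalence of norms then follow as an immediate corollary. I expect the main obstacle to be purely one of bookkeeping: one must verify that the three scaling relations $1/p_0=\sum 1/p_i$, $1/q_0=\sum 1/q_i$, and $1/\alpha_0=\sum 1/\alpha_i$, combined with the admissibility condition $-1/q_i<\alpha_i<-1/q_i'$ at every index, are simultaneously consistent with the Herz-space duality identification and with the index assumptions of the Hussain-Sarfraz multilinear boundedness theorems. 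In particular, one should check that $-\alpha_0$ indeed lies in the range $(-1/q_0',\,-1/q_0)$ required for the associate space to be a ball Banach function space in its own right, so that Lemma \ref{lem proud_lH1} and the construction in Lemma \ref{lem log} apply unchanged. Once this parameter compatibility is confirmed, the rest of the argument is a mechanical appeal to Theorem \ref{the factorization}.
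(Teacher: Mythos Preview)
Your proposal is correct and matches the paper's own approach: the paper simply remarks that Theorems~3 and~4 of \cite{hussain2014multilinear} supply the required multilinear boundedness of $T$ and $[b,T]_l$ on Herz spaces, after which Theorem~\ref{the factorization} applies directly (no explicit proof is written out). Your additional checks on the ball Banach structure, the associate-space identification $[\dot K_{q_0}^{\alpha_0,p_0}]'=\dot K_{q_0'}^{-\alpha_0,p_0'}$, and the parameter compatibility are more careful than the paper itself, but the strategy is identical.
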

	\begin{theorem}\label{the 6}
		Let $1 \leq l \leq m$,  $\alpha_i\in \mathbb R$, $1< q_i<\infty$ and $0<p_i<\infty$ for $i=0, 1, \dots, m$ satisfy $-1/q_i<\alpha_i<-1/q_i^{\prime}$. Suppose that $1/p_0=1/p_1+1/p_2+\dots+1/p_m$, $1/q_0=1/q_1+1/q_2+\dots+1/q_m$,$1/\alpha_0=1/\alpha_1+1/\alpha_2+\dots+1/\alpha_m$. Then, the following propositions are equivalent
		\begin{enumerate*}
			\item[\rm (1)] $b \in {\rm BMO}\left(\mathbb{R}^n\right)$;
			\item[\rm (2)] the commutator $[b, T]_l$ is bounded from $	\dot{K}_{q_1}^{\alpha_1, p_1}\times \dots \times \dot{K}_{q_m}^{\alpha_m, p_m}$ to	$\dot{K}_{q_0}^{\alpha_0, p_0}$.
		\end{enumerate*}
	\end{theorem}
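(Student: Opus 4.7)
The plan is to deduce Theorem \ref{the 6} from the general characterization Theorem \ref{the characterization BMO} of this paper by specializing the abstract ball Banach function space to the Herz space setting, with $X^{p_i}$ playing the role of $\dot K_{q_i}^{\alpha_i,p_i}$ and $[X^{p_0}]'$ playing the role of $\dot K_{q_0'}^{-\alpha_0,p_0'}$.

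For the implication (1)$\Rightarrow$(2), I would quote Theorem 4 of \cite{hussain2014multilinear} directly: under the index conditions $-1/q_i<\alpha_i<-1/q_i'$ together with the additive relations on $1/p_i$, $1/q_i$ and $1/\alpha_i$, the commutator $[b,T]_l$ is bounded from $\dot K_{q_1}^{\alpha_1,p_1}\times\cdots\times\dot K_{q_m}^{\alpha_m,p_m}$ into $\dot K_{q_0}^{\alpha_0,p_0}$ whenever $b\in{\rm BMO}(\mathbb R^n)$. No further work is needed for this direction.

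For the converse (2)$\Rightarrow$(1), I would invoke Theorem \ref{the characterization BMO}. Its hypothesis asks that both $T$ and $[b,T]_l$ be bounded on the relevant product of ball Banach function spaces: the latter is exactly assumption (2), while the former is supplied by Theorem 3 of \cite{hussain2014multilinear}. The conclusion $b\in{\rm BMO}$ then follows directly from the abstract theorem, and the qualitative $\|b\|_{\rm BMO}$-bound can be tracked through the Hardy space factorization of Theorem \ref{the factorization} if a quantitative version is desired.

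The main obstacle is the preliminary structural check rather than the abstract deduction. Specifically, one must verify that $\dot K_{q_i}^{\alpha_i,p_i}$ fits into the ball Banach framework of Definition \ref{ball Banach Def} under the stated parameter range, and one must correctly identify the K\"othe dual as $[\dot K_{q_0}^{\alpha_0,p_0}]' = \dot K_{q_0'}^{-\alpha_0,p_0'}$. This identification is precisely what forces the restriction $-1/q_i<\alpha_i<-1/q_i'$ in the statement, and it also underpins the H\"older-type inequality needed to interpret the pairing $\int[b,T]_l(\g_1,\dots,\g_m)\h\,dx$ that drives the proof of Theorem \ref{the characterization BMO}. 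Once these two facts are in place, properties (1)--(5) of Definition \ref{ball Banach Def} are immediate consequences of the Herz quasi-norm, and the theorem reduces to combining the two boundedness results of \cite{hussain2014multilinear} with Theorem \ref{the characterization BMO}.
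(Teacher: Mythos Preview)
Your proposal is correct and matches the paper's own approach: the paper does not give a standalone proof of this theorem but simply remarks that Theorems 3 and 4 of \cite{hussain2014multilinear} supply the needed boundedness of $T$ and $[b,T]_l$ on Herz spaces, after which Theorem \ref{the characterization BMO} applies. Your additional discussion of the structural checks (that $\dot K_{q_i}^{\alpha_i,p_i}$ is a ball Banach function space and that its K\"othe dual is $\dot K_{q_0'}^{-\alpha_0,p_0'}$) makes explicit what the paper leaves implicit in the sentence preceding the Herz-space applications.
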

	\subsection{Mixed-norm Lebesgue Spaces}
	Let $0<q_i\leq \infty$ for all $i=1, 2, \dots$. The mixed-norm Lebesgue space $L^{\vec{p}}\left(\mathbb{R}^n\right)$ is defined by
	\begin{align*}
		L^{\vec{p}}(\mathbb R^n)=\l\{f\in \mathcal M: \|f\|_{\vec{p}}=\left\{\int_{\mathbb{R}} \cdots\left[\int_{\mathbb{R}}\left|f\left(x_1, \ldots, x_n\right)\right|^{p_1} d x_1\right]^{\frac{p_2}{p_1}} \cdots d x_n\right\}^{\frac{1}{p_n}}<\infty \r\}.
	\end{align*}
	
	Benedek and Panzone \cite{benedek1961space} modified Lebesgue space to mixed-norm Lebesgue space by improving the norm in each dimension in 1961. Recently, this space has been extensively studied again since it has a better structure that is often applied to solving space-time partial differential equations. Here, the conditions of boundedness in Theorem \ref{the factorization} can be established by the extrapolation on ball Banach function space in \cite{deng2023extrapolations}. Hence, we gain the corresponding weak factorizations theorem on mixed-norm Lebesgue as follows.
	
	\begin{theorem}\label{the 5}
		Let $1<\vec{p}<\infty$. Then, for any $f \in$ $\mathbf H^1\left(\mathbb{R}^n\right)$, there exists sequences $\left\{\lambda_{j,i}\right\} \in \ell^1$ and functions $\h_j^i \in L^{\vec{p}^\prime}~\text{and}~ \g_{j}^i\in L^{\vec{p}}$ such that $f$ can be represented as \eqref{ine represent}. Moreover, 
		$$
		\|f\|_{\mathbf{H}^1} \approx \inf \left\{\sum_{i=1}^{\infty} \sum_{j=1}^{\infty}\left|\lambda_{i,j}\right|\left\|\h_j^i\right\|_{L^{\vec{p}^\prime}}\left\|\g_{j}^i\right\|_{L^{\vec{p}}} \right\}.
		$$
	\end{theorem}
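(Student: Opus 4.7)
The plan is to deduce Theorem \ref{the 5} as a direct specialization of Theorem \ref{the factorization} with the ball Banach function space $X$ taken to be the mixed-norm Lebesgue space $L^{\vec p}(\mathbb R^n)$. Accordingly, the work splits into two verifications: that $L^{\vec p}$ fits the ball Banach function space axioms together with the correct identification of its associate space, and that the multilinear operator $T$ and the commutator $[b,T]_l$ enjoy the required boundedness on products of mixed-norm spaces. Once both are in place, the factorization and the norm equivalence follow by invoking Theorem \ref{the factorization} mechanically.

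First, I would verify that $L^{\vec p}$ satisfies the five conditions of Definition \ref{ball Banach Def}. Items (1)--(3) are lattice and Fatou-type statements that follow from iterating the corresponding scalar $L^p$ facts along each coordinate (by monotone convergence in the innermost norm and moving outward). For (4) and (5), one writes $\chi_{B(x,r)}\le\prod_{k=1}^n\chi_{I_k}$ where $I_k$ is a one-dimensional interval containing the $k$-th projection of $B(x,r)$, and applies the iterated H\"older inequality to bound $\int_B |f|$ by a constant depending on $B$ times $\|f\|_{L^{\vec p}}$. Essentially all of this is already recorded in \cite{sawano2017hardy}, so I would simply cite it. In parallel, iterated duality in each coordinate identifies the associate space $(L^{\vec p})'=L^{\vec p{\,'}}$, where $\vec p{\,'}$ has components $p_k'$; this is exactly what is needed so that the role of $[X^{p_0}]'$ in Theorem \ref{the factorization} becomes $L^{\vec p{\,'}}$ in the conclusion of Theorem \ref{the 5}.

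The main obstacle is the boundedness of $T$ and $[b,T]_l$ from $L^{\vec p_1}\times\cdots\times L^{\vec p_m}$ into $L^{\vec p}$ for the appropriate multi-index relation. There is no elementary way to establish this by direct kernel estimates on mixed-norm spaces, so the plan is to run the extrapolation machinery for ball Banach function spaces developed in \cite{deng2023extrapolations}. The input is the weighted multilinear bound for $T$ and $[b,T]_l$ on $L^p(v_{\vec w})$ with $\vec w\in A_{\vec P}$ from Lerner et al. \cite{lerner2009new}; the extrapolation theorem then transfers those bounds to any ball Banach function space on which the Hardy--Littlewood maximal operator satisfies the appropriate vector-valued inequality. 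For $L^{\vec p}$ the required maximal inequality is obtained by iterating the scalar Fefferman--Stein inequality along each coordinate, which is standard for $1<\vec p<\infty$ and is the place where the strict inequalities on the exponents are consumed.

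With both hypotheses verified, Theorem \ref{the factorization} applies directly: every $f\in\mathbf H^1(\mathbb R^n)$ admits a representation \eqref{ine represent} with $\h_j^i\in L^{\vec p{\,'}}$ and $\g_{j,k}^i\in L^{\vec p_k}$, and the infimum over all such representations of $\sum_{i,j}|\lambda_{i,j}|\|\h_j^i\|_{L^{\vec p{\,'}}}\prod_k\|\g_{j,k}^i\|_{L^{\vec p_k}}$ is comparable to $\|f\|_{\mathbf H^1}$. Specializing this to the form stated in Theorem \ref{the 5} completes the proof, and no additional arguments beyond those already present in the proof of Theorem \ref{the factorization} are needed.
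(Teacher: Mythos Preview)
Your proposal is correct and follows essentially the same approach as the paper: the paper does not give a standalone proof of this theorem but simply remarks that the boundedness hypotheses of Theorem~\ref{the factorization} for $X=L^{\vec p}$ are supplied by the extrapolation results of \cite{deng2023extrapolations}, after which Theorem~\ref{the factorization} applies directly. Your plan makes this explicit---verifying the ball Banach axioms via \cite{sawano2017hardy}, identifying $(L^{\vec p})'=L^{\vec p{\,'}}$, and feeding the weighted bounds of \cite{lerner2009new} into the extrapolation machinery---and is in fact slightly more detailed than the paper's own treatment.
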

	\begin{theorem}\label{the 6}
		Let $1<\vec{p}<\infty$. Then, the commutator $[b, T]$ is bounded on $L^{\vec{p}}$  if and only if  $b \in {\rm BMO}\left(\mathbb{R}^n\right)$.
	\end{theorem}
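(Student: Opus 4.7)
The plan is to deduce this theorem as a direct specialization of the general characterization in Theorem \ref{the characterization BMO}, taking the ball Banach function space $X$ to be the mixed-norm Lebesgue space $L^{\vec p}(\mathbb R^n)$ itself with parameters $m=l=1$ and $p_0=p_1=1$. Under this choice, the hypothesis $1/p_0=1/p_1$ is trivially satisfied, the space $X^{p_1}$ reduces to $L^{\vec p}$, and the associate space $[X^{p_0}]'$ becomes $L^{\vec p\,'}$. The first step is therefore to record, as has already been noted in the discussion preceding Theorem \ref{the 5}, that $L^{\vec p}$ is a ball Banach function space and that its K\"othe dual is $L^{\vec p\,'}$; this supplies the functional-analytic framework needed to invoke Theorem \ref{the characterization BMO}.

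For the sufficiency direction ($b\in\mathrm{BMO}\Rightarrow[b,T]$ bounded on $L^{\vec p}$), I would argue by extrapolation. The Calder\'on--Zygmund operator $T$ and its commutator $[b,T]$ with a BMO symbol are classically bounded on every weighted Lebesgue space $L^q(w)$ with $w\in A_q$, $1<q<\infty$. The extrapolation theorem on ball Banach function spaces from \cite{deng2023extrapolations}, cited in the paragraph preceding the statement, then transfers this family of weighted estimates to the mixed-norm scale, yielding the boundedness of both $T$ and $[b,T]$ on $L^{\vec p}$ for every admissible $\vec p$. This takes care of one implication and, at the same time, verifies the boundedness hypothesis of Theorem \ref{the characterization BMO}.

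For the necessary direction ($[b,T]$ bounded on $L^{\vec p}\Rightarrow b\in\mathrm{BMO}$), I would apply Theorem \ref{the characterization BMO} directly with the parameter choices above. The $mn$-homogeneity condition \eqref{ine mn-homogeneous} is the standard non-degeneracy assumption on the kernel of $T$ (with $m=1$ it is just the requirement that $|T\chi_B|\gtrsim 1$ on a suitably placed ball far from $B$), which holds for the operators covered here. Given the hypotheses verified in the previous paragraph, Theorem \ref{the characterization BMO} then yields $b\in\mathrm{BMO}(\mathbb R^n)$, closing the equivalence.

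The only real obstacle is a bookkeeping one: ensuring that the extrapolation machinery from \cite{deng2023extrapolations} does indeed apply simultaneously to $T$ and to the commutator $[b,T]$ with the same assumption $b\in\mathrm{BMO}$, so that both boundedness hypotheses required by Theorem \ref{the characterization BMO} are genuinely available on $L^{\vec p}$. Once this is confirmed, the rest of the argument is essentially a rewriting of the general theorem in the mixed-norm notation, and the proof is therefore brief enough that the authors omit the details in the same spirit as for the weighted and variable-exponent examples.
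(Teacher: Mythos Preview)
Your proposal is correct and follows precisely the route the paper intends: verify the boundedness of $T$ and $[b,T]$ on $L^{\vec p}$ via the extrapolation result in \cite{deng2023extrapolations}, and then invoke Theorem~\ref{the characterization BMO} with $X=L^{\vec p}$ in the linear case $m=l=1$. The paper itself offers no further detail beyond the sentence preceding the mixed-norm theorems, so your write-up is in fact more explicit than what appears there.
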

	
	\subsection{Lorentz Spaces}
	Before recalling the definition of the Lorent space, we give the definition of the decreasing rearrangement function. We define the decreasing rearrangement of measurable function $f$ by
	$$
	f^*(t)=\inf \left\{s>0: d_f(s) \leq t\right\},
	$$
	where
	$$
	d_f(s):=\mu(\{x \in X:|f(x)|>s\}), \quad \forall s>0 .
	$$
	
	Let $0<p, r \leq$ $\infty$. The Lorentz space is defined to be the set  as follows
	\begin{align*}
		L^{p, r}(\mathbb R^n)=\l\{f\in \mathcal M: \|f\|_{L^{p, r}}<\infty\r\},
	\end{align*}
	here, the norm $\|f\|_{L^{p, r}}$ is defined by
	\begin{align*}
		\|f\|_{L^{p, r}}=\left(\int_0^{\infty}\left(t^{\frac{1}{p}} f^*(t)\right)^r \frac{d t}{t}\right)^{\frac{1}{r}}~\text{if}~r<\infty ~~~~\text{and }~~~~ \|f\|_{L^{p, r}}=\sup _{t>0} t^{\frac{1}{p}} f^*(t) ~ \text { if } r=\infty.
	\end{align*}
	
	Lorentz space can reduce to Lebesgue space, that is $L^{p,p}(\mathbb{R}^n)=L^p(\mathbb R^n)$. For more details of Lorentz space,  we can see the book \cite[the Section 1.4]{2014Classical}, in which, we know the dual space of $L^{p, r}(\mathbb R^n)$ is $L^{p^\prime, r^\prime}(\mathbb R^n)$. Tao et al.\cite{tao2023boundedness} established the boundedness of multilinear operators on Lorentz space, which ensures the following theorem holds.
	
	%\begin{lemma}\rm \cite{tao2023boundedness	}
	%	Let $1<p<\infty, r \in[1, \infty]$. Then, the Calder\'on-Zygmund operator $T$ and commutators $[b,T]$ generated by $T$ and $\rm BMO$ function $b$ are bounded on $L^{p,r}$.
	%\end{lemma}
	\begin{theorem}\label{the 5}
		Let $1<p<\infty, r \in[1, \infty]$. Then, for any $f \in$ $\mathbf H^1\left(\mathbb{R}^n\right)$, there exists sequences $\left\{\lambda_{j,i}\right\} \in \ell^1$ and functions $\h_j^i \in L^{p^\prime, r^\prime}, \g_{j}^i\in L^{p,r}$ such that $f$ can be represented as \eqref{ine represent}. Moreover, we have that
		$$
		\|f\|_{\mathbf{H}^1} \approx \inf \left\{\sum_{i=1}^{\infty} \sum_{j=1}^{\infty}\left|\lambda_{i,j}\right|\left\|\h_j^i\right\|_{L^{p^\prime, r^\prime}}\left\|\g_{j}^i\right\|_{L^{p,r}} \right\}.
		$$
	\end{theorem}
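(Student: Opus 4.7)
The plan is to derive Theorem \ref{the 5} as a direct specialization of Theorem \ref{the factorization} to the ball Banach function space $X = L^{p,r}(\mathbb R^n)$ in the linear case $m=1$ (note that the theorem statement features only a single $\g_j^i$, so $T$ here plays the role of an ordinary Calder\'on--Zygmund operator). To invoke Theorem \ref{the factorization}, three prerequisites must be established, and once they are in place the conclusion and norm equivalence are obtained by reading off the conclusion of that theorem with $X^{p_1} = L^{p,r}$ and $[X^{p_0}]' = L^{p',r'}$.

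First, I would verify that $L^{p,r}(\mathbb R^n)$ with $1<p<\infty$ and $1\leq r\leq\infty$ satisfies the five axioms in Definition \ref{ball Banach Def}. Items (1)--(3) are immediate from the definition of the Lorentz norm via the nonincreasing rearrangement and standard monotone convergence properties of $f^*$; item (4) follows from $\|\chi_B\|_{L^{p,r}} \approx |B|^{1/p}$ for every $B\in\mathbb B$; and item (5) is a consequence of H\"older's inequality on Lorentz spaces applied to $f\cdot\chi_B$. Second, the identification $[L^{p,r}]' = L^{p',r'}$ with equivalent norms is classical (see \cite{2014Classical}), so the space hosting $\h_j^i$ matches the associate space required by Theorem \ref{the factorization}.

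Third, I would verify the boundedness hypotheses of Theorem \ref{the factorization}, namely that both $T$ and the commutator $[b,T]$ map $L^{p,r}$ to itself when $b\in\mathrm{BMO}$. Boundedness of $T$ on $L^{p,r}$ follows from real interpolation applied to its known $L^{p_0}$--$L^{p_0}$ bounds for all $1<p_0<\infty$. For the commutator, the theorem invokes \cite{tao2023boundedness} for the multilinear case; in the present $m=1$ situation one can also argue directly via the pointwise sharp maximal function estimate $M^{\sharp}([b,T]f)(x) \lesssim \|b\|_{\mathrm{BMO}}\bigl(M_{s}(Tf)(x) + M_{s}f(x)\bigr)$ for some $s>1$, combined with the $L^{p,r}$--boundedness of the Hardy--Littlewood maximal operator on the Lorentz scale.

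With prerequisites (i)--(iii) verified, Theorem \ref{the factorization} applies and yields the decomposition \eqref{ine represent} together with the claimed norm equivalence. The main obstacle I expect is the third step at the Lorentz endpoints $r=1$ and $r=\infty$: the commutator bound is a nonlinear estimate, so its extension from the diagonal $L^{p_0}$--bounds to the full Lorentz scale cannot be obtained by the straightforward Marcinkiewicz/real-interpolation argument used for $T$ itself and instead requires the Fefferman--Stein sharp maximal function approach above (or a direct appeal to the specialized result in \cite{tao2023boundedness}). Once this bound is in hand, the rest of the proof is an immediate application of the general machinery.
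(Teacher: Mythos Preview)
Your proposal is correct and follows essentially the same route as the paper: the paper does not give a standalone proof of this theorem but simply notes that $L^{p,r}$ is a ball Banach function space with associate space $L^{p',r'}$ (citing \cite{2014Classical}) and that the required boundedness of $T$ and $[b,T]$ on Lorentz spaces is supplied by \cite{tao2023boundedness}, after which Theorem~\ref{the factorization} applies directly. Your write-up is more explicit about verifying each hypothesis, but the structure---check the ball Banach axioms, identify the associate space, import the operator bounds from the literature, then invoke the abstract factorization theorem---is identical to the paper's.
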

	\begin{theorem}\label{the 6}
		Let $1<p<\infty, r \in[1, \infty]$. Then, the commutator $[b, T]$ is bounded on $L^{p,r}$  if and only if  $b \in {\rm BMO}\left(\mathbb{R}^n\right)$.
	\end{theorem}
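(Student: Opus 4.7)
The plan is to derive this statement as a direct specialization of Theorem~\ref{the characterization BMO} to the case $m=1$ with $X=L^{p,r}(\mathbb R^n)$, so the work divides cleanly into three pieces: verify the structural hypothesis on $L^{p,r}$, cite the scalar boundedness theory that populates the remaining hypotheses of Theorem~\ref{the characterization BMO}, and then invoke that theorem.

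First, I would verify that for $1<p<\infty$ and $r\in[1,\infty]$ the Lorentz space $L^{p,r}(\mathbb R^n)$ qualifies as a ball Banach function space in the sense of Definition~\ref{ball Banach Def}: the lattice and Fatou properties follow from standard properties of the decreasing rearrangement $f^{*}$, while $\|\chi_B\|_{L^{p,r}}<\infty$ and the local integrability bound $\int_B|f|\leq C_B\|f\|_{L^{p,r}}$ on balls follow from $p>1$ together with H\"older's inequality on Lorentz spaces. I would also record that the associate space is $L^{p',r'}(\mathbb R^n)$, as in \cite[Section~1.4]{2014Classical}, which is exactly the role played by $[X^{p_0}]'$ in the statement of Theorem~\ref{the characterization BMO} when $p_0=p$.

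Second, for the sufficiency direction ($b\in\mathrm{BMO}\Rightarrow[b,T]$ bounded on $L^{p,r}$) I would invoke the boundedness of Calder\'on--Zygmund operators and their BMO commutators on Lorentz spaces established by Tao et al.\ \cite{tao2023boundedness}; the same reference also provides the $L^{p,r}\to L^{p,r}$ boundedness of $T$ itself, which is needed as an input for the other direction.

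Third, for the necessity direction ($[b,T]$ bounded $\Rightarrow b\in\mathrm{BMO}$), which is the substantive content, I would apply Theorem~\ref{the characterization BMO} with $m=1$, $p_0=p_1=p$, and $X=L^{p,r}(\mathbb R^n)$. All hypotheses of that theorem are then in place by the two previous steps, so it immediately yields $b\in\mathrm{BMO}(\mathbb R^n)$. The only place a genuine obstruction could arise is in checking that $T$ satisfies the $mn$-homogeneity condition \eqref{ine mn-homogeneous}, but this is a standing assumption on $T$ throughout the paper rather than something that must be re-established for $L^{p,r}$, so no extra work is required. Thus the main obstacle is the verification of the ball Banach function space axioms for $L^{p,r}$, and even this is essentially a bookkeeping exercise with the rearrangement function.
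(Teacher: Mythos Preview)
Your proposal is correct and follows precisely the route the paper takes: the paper does not give a separate proof of this statement but simply notes that Lorentz spaces are ball Banach function spaces (citing \cite{sawano2017hardy}) and that the required boundedness of $T$ and $[b,T]$ on $L^{p,r}$ is supplied by \cite{tao2023boundedness}, after which Theorem~\ref{the characterization BMO} applies with $m=1$. The only cosmetic point is that writing ``$X=L^{p,r}$ with $p_0=p_1=p$'' does not literally give $X^{p_0}=L^{p,r}$ under the convexification convention; in the applications the paper simply treats the symbols $X^{p_i}$ as placeholders for the target spaces, so you should instantiate the single space directly as $L^{p,r}$ rather than as a power of it.
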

	
	%\bigskip
	% \medskip

	\noindent
	\textbf{\bf Funding information}\\
	The research was supported by National Natural Science Foundation of China (Grant Nos. 12061069 and 12271483).\\
	\textbf{\bf Contributions}\\
	All authors have reviewed the manuscript.\\
	\textbf{\bf Conflict of interest}\\
     The authors declare that they have no conflict of interest.

	\noindent Yichun Zhao and Jiang Zhou\\
	\medskip
	\noindent
	College of Mathematics and System Sciences\\
	Xinjiang University\\
	Urumqi 830046\\
	\smallskip
	\noindent{E-mail }:
	\texttt{zhaoyichun@stu.xju.edu.cn} (Yichun Zhao);
	\texttt{zhoujiang@xju.edu.cn} (Jiang Zhou)\\
	
	\bigskip \noindent
	Xiangxing Tao\\
	\medskip
	\noindent 
	Department of Mathematics\\
	Zhejiang University of Science and Technology\\
	Hangzhou 310023\\
	\smallskip
	\noindent{E-mail }:
	\texttt{xxtao@zust.edu.cn} (Xiangxing Tao)\\
	\medskip
	
\end{document}